\definecolor{marin}{rgb}   {0.,   0.3,   0.7} 
\definecolor{rouge}{rgb}   {0.8,   0.,   0.} 
\definecolor{sepia}{rgb}   {0.8,   0.5,   0.} 
\newtheorem{lemma}{Lemma}[section]
\newtheorem{theorem}[lemma]{Theorem}
\newtheorem{proposition}[lemma]{Proposition}
\newtheorem{remark}[lemma]{Remark}
\newtheorem{example}[lemma]{Example}
\newtheorem{notation}[lemma]{Notation}
\newtheorem{definition}[lemma]{Definition}
\newtheorem{conclusion}[lemma]{Conclusion}
\numberwithin{equation}{section}
\newcommand{\QED}{\mbox{}\hfill \raisebox{-0.2pt}{\rule{5.6pt}{6pt}\rule{0pt}{0pt}} 
          \medskip\par}
\newcommand{\eps}{\varepsilon}
\newcommand{\dd}{\mathrm{d}}
\newcommand{\Hc}{\mathcal{H}}
\newcommand{\N}{\mathbb{N}}
\newcommand{\Nc}{\mathcal{N}}
\newcommand{\R}{\mathbb{R}}
\newcommand{\C}{\mathbb{C}}
\newcommand{\Z}{\mathbb{Z}}
\newcommand{\Norm}[2]{\|#1\|\left.\vphantom{T_{j_0}^0}\!\!\right._{#2}}
\newcommand{\deltax}{{\delta x}}
\newcommand{\deltat}{h}
\renewcommand{\Re}{\mathrm{Re}}
\renewcommand{\Im}{\mathrm{Im}}
\newcommand{\ad}{\mathrm{ad}}
\author{}
\address{ } 
\email{}
\author{Erwan Faou \and Georg Maierhofer \and Katharina Schratz}
\address{ }
  \email{}
\title[]
{ Fully discrete backward error analysis for the midpoint rule applied to the nonlinear Schr\"odinger equation}
\begin{document}

\begin{abstract}  The use of symplectic numerical schemes on Hamiltonian systems is widely known to lead to favorable long-time behaviour. While this phenomenon is thoroughly understood in the context of finite-dimensional Hamiltonian systems, much less is known in the context of Hamiltonian PDEs. In this work we provide the first dimension-independent backward error analysis for a Runge--Kutta-type method, the midpoint rule, which shows the existence of a modified energy for this method when applied to nonlinear Schr\"odinger equations regardless of the level of spatial discretisation. We use this to establish long-time stability of the numerical flow for the midpoint rule.
\end{abstract}

\subjclass{	65P10, 35Q55, 65M22}
\keywords{Midpoint rule, symplectic integrators, nonlinear Schr\"odinger equation, full discretisation, modified energy}
\thanks{All authors gratefully acknowledge funding from the European Research Council (ERC) under the European
Union's Horizon 2020 research and innovation programme (grant agreement No.\ 850941). GM additionally gratefully acknowledges funding from the European Union's Horizon Europe research and innovation programme under the Marie Sklodowska--Curie grant agreement No.\ 101064261. EF is supported by the Simons collaboration on wave turbulence, the ANR project KEN ANR-22-CE40-0016, and benefited from the support of the Centre Henri Lebesgue ANR-11-LABX-0020-0. 
}

\maketitle
\section{Introduction}
In this work we study the symplectic midpoint rule applied to the nonlinear Schr\"odinger equation (NLSE)
\begin{align}\label{eqn:NLSE}
    \begin{cases}
        \partial_t z= -i\Delta z+i\lambda |z|^{2r}z,&(t,x)\in (0,T)\times \mathbb{R},\\
        z(0,x)=z_0,&x\in\mathbb{R},
    \end{cases}
\end{align}
where $r\in\mathbb{N}_{\geq 1}$ denotes the degree of the nonlinearity, and $\lambda \in \{\pm 1\}$ determines if the equation is {\em focusing} ($\lambda = -1$) or {\em defocusing} ($\lambda = 1$). We will consider a spatial discretization of this equation by finite differences and consider the family of fully discrete schemes depending on the time and space discretization parameters. 

The equation \eqref{eqn:NLSE} is a Hamiltonian partial differential equation (PDE) associated with the real energy 
\begin{equation}
\label{eq:Hcal}
\mathcal{H}(u,\bar u) = \int_{\R} |\nabla u(x)|^2 \dd x + \frac{\lambda}{r+1} \int_{\R} |u(x)|^{2r+2} \dd x
\end{equation}
which is preserved for all times
along smooth solutions of \eqref{eqn:NLSE}, and we can write this latter equation under the symplectic form $\partial_t z =  i \frac{\partial \Hc}{\partial \bar u}(z,\bar z)$ (cf. \cite[Section~III.1]{faou2012geometric} and \cite[Section~3.2]{marsden2002introduction}). Note that this equation also preserves the $L^2$ norm
\begin{equation}
\label{eq:L2}
\Nc(u) = \int_\R |u(x)|^2 \dd x. 
\end{equation}

In the case of finite-dimensional Hamiltonian systems the existence of a modified energy corresponding to the midpoint rule and, more generally, symplectic Runge--Kutta methods, is well-known since the work by Benettin \& Giorgilli \cite{benettin1994hamiltonian}, Murua \cite{murua1995metodos} and Tang \cite{tang1994} (cf. also \cite[Chapter IX.3]{hairer2013geometric}). This means, in the finite-dimensional case, that the discrete values given by the midpoint rule correspond to the evaluation of a continuous function which is the solution of a modified Hamiltonian system. This result is one of the central underpinnings of advantageous properties of symplectic integrators and permits a rigorous understanding of their long-time behaviour. Perhaps somewhat surprisingly results of this form (i.e. the existence of a modified energy and control on the long-time behaviour) are much more limited for symplectic integrators applied to partial differential equations (i.e. infinite-dimensional Hamiltonian systems). While in practise symplectic methods often exhibit good long-time behaviour \cite{faou2012geometric,celledoni2008symmetric,maierhofer_schratz_24} the aforementioned results for finite-dimensional systems do not translate easily to the infinite-dimensional case, essentially because the presence of unbounded operators means that analytic bounds derived for finite-dimensional cases break down when the spatial discretisation is refined. Recent work has provided some initial results resolving this problem by proving the existence of a modified energy for splitting methods for example in the work of Faou \& Gr\'ebert \cite{faou2011hamiltonian} and Bambusi et al. \cite{bambusi2013existence}.

In the present work we provide, for the very first time for a Runge--Kutta method, dimension-independent guarantees of the existence of a modified Hamiltonian applied to a discretisation of the NLSE \eqref{eqn:NLSE}. This is achieved by formulating the midpoint rule as a modified implicit-explicit splitting method involving pseudo-differential flows, and thus follows a two-step process: (i) firstly the existence of a suitable modified vector field is shown which leads to the splitting formulation of the midpoint rule; (ii) we use the implicit-explicit splitting decomposition and an approach based on \cite{faou2011hamiltonian,bambusi2013existence} to prove the existence of a modified energy for the full midpoint method.

An important point to notice is that {\em numerical resonances can a priori occur}, and that the existence of the modified energy requires the use of a CFL (Courant-Friedrichs-Lewy, \cite{CFL28}) restriction between the temporal and spatial discretisation parameters. This requirement is not surprising as it also appears in the context of splitting methods for the nonlinear Schr\"odinger equation.

The remainder of this manuscript is structured as follows. In Section~\ref{sec:problem_setting} we introduce the fully discrete NLSE which we consider for the remainder of this work, as well as useful notation for the presentation of later results. In Section~\ref{sec:midpoint_rule_as_splitting_method}, we then formulate the midpoint rule as an implicit-explicit (IMEX) splitting in the spirit of \cite{AscherReich99,SternGrinspun09,Zhang1997CheapIS}, see Propositions \ref{prop:splitting_formulation_on_flows} and \ref{prop:modified_polynomial_hamiltonian_expansion}. 
This is followed in Section~\ref{sec:modified_energy_for_midpoint_rule} by the formal construction and statement of our main result, which is given by Theorem~\ref{thm:modified_energy_midpoint} and which gives the existence of a modified energy under a CFL \eqref{eqn:CFL_condition_modified_energy} similar to the one used in \cite{faou2011hamiltonian}. Finally, as an application, in Section~\ref{sec:application_to_long-time_stability}, we prove the almost-global stability of the numerical scheme for small initial data in the energy space, see Theorem \ref{Theo:global}. 

\section{Problem setting and notation}\label{sec:problem_setting}
\subsection{The discrete NLSE} Let us first describe the spatial discretisation which we apply to the NLSE \eqref{eqn:NLSE} for the purpose of our analysis. We begin by approximating $\Delta f(x)\approx (\deltax)^{-2}(f(x+\deltax)-2f(x)+f(x-\deltax))$ which, together with a Dirichlet cut-off at $x=\pm (K+1)\deltax, K\in\mathbb{N}$ leads to the following system of ODEs called the discrete NLSE (see \cite{kevrekidis2009discrete} for the derivation, applications and references about this model) 
\begin{align}\begin{split}\label{eqn:discrete_NLSE}
    \begin{cases}
    \frac{\dd u_\ell}{\dd t}= i\frac{1}{(\deltax)^2}\left(-u_{\ell+1}+ 2u_\ell-u_{\ell-1}\right)+i\lambda |u_\ell|^{2r}u_\ell,&-K\leq \ell\leq K,\\
    u_{\pm(K+1)}=0,\\
    u_{\ell}(0)=z_{0}(\ell\deltax), &-K\leq \ell\leq K, 
    \end{cases}
    \end{split}
\end{align}
and we expect $u_\ell(t)$ to be an approximation of $z(t,\ell \delta x)$ the exact solution of \eqref{eqn:NLSE} at the grid points $\ell \delta x$. 
In the present work we are interested in studying this \textit{family} of spatially discrete problems for approximating the NLSE \eqref{eqn:NLSE} for arbitrary values $K,\deltax$. Note that in practice, fixing the length $X := K \delta x$ results in the Dirichlet problem for \eqref{eqn:NLSE}, i.e. transforms the problem from the unbounded domain $x \in \R$ to the same operator with Dirichlet boundary conditions on $[-X,X]$. We will not study the effect of this spatial truncation here (see \cite{bambusi2013existence,BernierFaou2019} for qualitative estimates in the case of solitons). 

For any fixed value fo $K,\deltax$ the previous system corresponds to the following ODE system in $2K+1$ dimensions
\begin{align}\label{eqn:model}
\frac{\dd u}{\dd t} = iAu + if(u),
\end{align}
with $f(u)_\ell= \lambda|u_\ell|^{2r}u_\ell$ and
\begin{align*}
    A=\frac{1}{\deltax^2}\begin{pmatrix}
        2&-1&\\
        -1&2&-1\\
        &-1&2&-1\\
        &&\ddots&\ddots&\ddots\\
        &&&-1&2&-1\\
        &&&&-1&2
    \end{pmatrix}.
\end{align*}
This equation turns out to be a Hamiltonian system associated with the energy (note here we think of $u$ as a column vector)
\begin{align}
\label{energy2}
H(u,\bar u) &= \delta x (\bar u^T A u) + F(u)\\
\nonumber &:= 2 \delta x \sum_{\ell = -K}^{K-1}  \frac{|u_{\ell+1} - u_{\ell}|^2 }{\delta x^2} 
+ \frac{\lambda}{r+1} \delta x \sum_{\ell = -K}^{K} |u_\ell|^{2r +2}, 
\end{align}
which is a discrete approximation of the continuous energy $\Hc$.
Note, moreover, that we can check directly that the system \eqref{eqn:discrete_NLSE} preserves the discrete $L^2$ norm
\begin{equation}
\label{eq:discrL2}
N(u) = \delta x \sum_{\ell = -K}^K |u_\ell|^2. 
\end{equation}
In particular, this preservation property ensures the global existence of the solution to the discrete NLSE system. 

\subsection{Midpoint rule}

To approximate the solution $u(t) = (u_\ell)_{\ell = -K}^K$ of \eqref{eqn:discrete_NLSE}, we discretize in time using a time step $h > 0$ and we consider the sequence $u^n = (u^n_\ell)_{\ell = -K}^K$ defined by induction  
$u^{n+1}:=\varphi_{h}(u^n)$ as the solution of the implicit equation
\begin{align}\label{eqn:midpoint_rule_nlse}
    u^{n+1}=u^{n} +\frac{i\deltat}{2}A(u^{n+1}+u^n)+i\deltat f\left(\frac{u^{n+1}+u^{n}}{2}\right). 
\end{align}
The sequence  $(u^n)_{n \geq \N}$ is then an approximation of the $u(n h)$ of \eqref{eqn:discrete_NLSE}. Note that this symplectic scheme preserves the $L^2$ norm $N(u^{n+1}) = N(u^n)$ which is a quadratic invariant of the problem (see \cite[Section VI.7]{hairer2013geometric}). 

Our aim is to establish uniform estimates for the existence of a modified Hamiltonian for \eqref{eqn:midpoint_rule_nlse}, meaning estimates which are valid for all systems in this family, i.e. independent of both $\deltax$ and $K$. We aim at proving the following result: For any given $N$, the midpoint rule coincide with the flow at time $h$ of a modified Hamiltonian system associated with a Hamiltonian function $H_h^{(N)}$ in the sense that 
$$
\varphi_h(u) = \Phi_{H_h^{(N)}}^h(u) + \mathcal{O}(h^{N+1}) \,\,\text{as}\,\, h\rightarrow 0,
$$
where $\Phi_P^t$ denote the flow of the Hamiltonian system of energy $P$. Such a result is not a surprise for general Hamiltonian system discretised with symplectic methods, \textit{when the spatial discretisation is fixed}. The main goal of this work is to make the previous construction and estimates {\em independent} of $K$ and $\delta x$. Such a result exist for splitting methods, see \cite{Debussche2009,faou2011hamiltonian,faou2012geometric} and can be used to prove stability results over long times for small solutions \cite{faou2012geometric}, solitary waves and plane waves \cite{bambusi2013existence,faou2014plane}. But this work is the first one concerning more classical symplectic Runge--Kutta methods, exemplified here by the midpoint rule. 

Before introducing the notation and the mathematical framework that will be used in this paper, let us remark that the linear case $\lambda = f = 0$ degenerates to the linear equation 
$$
u^{n+1} = R(hA) u^n = R(hA)^n u^0. 
$$
where $R$ is the stability function of the midpoint rule: 
$$
R(hA) = \frac{1 + i \frac{\deltat A}{2}}{1 - i\frac{\deltat A}{2}} = \exp\left( 2 i \arctan\left( \frac{\deltat A}{2}\right)\right). 
$$
This operator can be defined in several ways for example as 
\begin{align}\label{eqn:def_RdeltatA}
    R(\deltat A) = U^{-1}\exp\left( 2 i \arctan\left( \frac{\deltat D}{2}\right)\right)U,
\end{align}
where the action of the functions is understood to be on each element of the diagonal matrix $D$ and $U$ is a unitary matrix such that $A=U^{-1}DU$.
In this case, the backward error analysis is straightforwardly done: $u^n$ coincides with the solution at time $t = nh$ of the modified system 
$$
\frac{\dd}{\dd t} v = i \frac{2}{h}  \arctan\left( \frac{\deltat A}{2}\right) v. 
$$
ensuring the preservation of a modified energy for all times. This fact was used in \cite{Debussche2009,faou2011hamiltonian} to obtain long time energy estimates for splitting methods. 

\subsection{Functional setting}
The discrete space of functions is
$$
V_\deltax (= V_{\deltax,K}) =  \{u \in \C^{\mathbb{Z}}\vert u_{j}=0,|j|>K\}, 
$$
where the dependence in $K$ will remain implicit in the notation\footnote{This is consistent with practical applications, where we might take $K = X (\delta x)^{-1}$ with $X$ denoting the size of the Dirichlet cut-off, or the {\em large box} in which the problem on the real line is embedded. Note the case $X = 2\pi$ with periodic boundary conditions could be also tackled with a similar analysis.}. This space is 
equipped with the discrete norm 
$$
\Norm{\psi}{\deltax}^2 = 2\deltax \sum_{j\in\Z} \frac{|\psi_{j+1}-\psi_j |^2}{\deltax^2}+\deltax \sum_{j\in\Z} |\psi_j|^2,
$$
which is the norm associated with the real scalar product 
\begin{align}\label{eqn:bilinear_expression_discrete_norm}
    \langle \psi, \varphi \rangle_{\deltax}:=\deltax \,\mathrm{Re}\left[\overline{\psi}^T (I +A)\varphi\right], \qquad 
    \Norm{\psi}{\deltax} = \deltax \left[\overline{\psi}^T (I +A)\psi\right].
\end{align}
We can prove (see for instance \cite{bambusi2013existence}), that this norm is an algebra norm on $V_{\delta x}$, uniformly in $\delta x$, see Lemma~\ref{lem:V_h_is_algebra} below.

Following \cite{bambusi2009continuous}, we identify $V_\deltax$ with a finite element subspace of
$H^1(\R;\C)$. More precisely,  defining the function $s:\R \to \R$ by 
\begin{equation}\label{eqn:embedding_to_H1}
s(x)=
\begin{cases}
0\qquad\qquad  &{\rm if}\quad |x|>1,\\
x+1\quad  &{\rm if}\quad   -1\leq x\leq 0,\\
- x+1\quad  &{\rm if}\quad  0\leq x\leq 1,
\end{cases}
\end{equation}
 the identification is done through the map $i_\deltax: V_\deltax \to H^1(\R;\C)$ defined by 
\begin{equation}
\label{eq:ih}
\left\{\psi_j\right\}_{j\in\Z}\mapsto (i_\deltax\psi)(x) := \sum_{j\in\Z} \psi_j \ s\!\left(\frac{x}{h}-j\right) \ ,
\end{equation}
which we can easily check to be a continuous isomorphism between the two normed vector spaces {\em i.e.} there exists constant $c>0$ and $C$ independent of $\deltax$ and $K$ such that for all $v \in V_\deltax$, 
\begin{equation}
\label{eq:equiv}
c \Norm{v}{\deltax} \leq \Norm{i_{\deltax}(v)}{H^1} \leq C\Norm{v}{\deltax}. 
\end{equation}

\begin{lemma}\label{lem:V_h_is_algebra}
  $V_\deltax$ with the norm $\Norm{\cdot}{\deltax}$ is an algebra, with a constant independent of dimension. In particular, for any $v,w\in V_{\deltax}$,
  \begin{align*}
  \Norm{v\bullet w}{\deltax}\leq C\Norm{v}{\deltax}\Norm{w}{\deltax},
  \end{align*}
  where $C$ does not depend on $K$ and $\deltax$ and where $\bullet$ denotes the elementwise product of the two vectors. Occasionally we will drop the notation $\bullet$ when it is clear from context.
\end{lemma}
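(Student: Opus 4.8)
The plan is to mimic the classical proof that $H^1(\R)$ is a Banach algebra in one space dimension, whose only genuine ingredient is the Sobolev embedding $H^1\hookrightarrow L^\infty$. Accordingly, the heart of the matter is a \emph{discrete} Sobolev inequality: there is an absolute constant $C$, independent of $\deltax$ and $K$, such that
\[
\sup_{j\in\Z}|v_j|\;\leq\;C\,\Norm{v}{\deltax}\qquad\text{for all }v\in V_\deltax .
\]
Granting this and writing $\|v\|_{\ell^\infty}=\sup_{j\in\Z}|v_j|$, the algebra estimate follows from a discrete Leibniz rule. The only real point to watch is that the constant ends up independent of the discretisation parameters; this is exactly where the $\deltax$-weights in $\Norm{\cdot}{\deltax}$ must be tracked.

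To prove the discrete Sobolev inequality, fix $m$ and use that $v$ has finite support, so $v_{-K-1}=0$ and, by telescoping, $|v_m|^2=\sum_{j\leq m-1}\bigl(|v_{j+1}|^2-|v_j|^2\bigr)$. Since $|v_{j+1}|^2-|v_j|^2=\Re\bigl[(v_{j+1}-v_j)\overline{(v_{j+1}+v_j)}\bigr]$, we get $|v_m|^2\leq\sum_{j\in\Z}|v_{j+1}-v_j|\,(|v_{j+1}|+|v_j|)$, and Cauchy--Schwarz together with $\sum_j(|v_{j+1}|+|v_j|)^2\leq4\sum_j|v_j|^2$ yields
\[
|v_m|^2\;\leq\;2\Bigl(\sum_{j\in\Z}|v_{j+1}-v_j|^2\Bigr)^{1/2}\Bigl(\sum_{j\in\Z}|v_j|^2\Bigr)^{1/2}.
\]
One then rewrites the two factors in terms of the two pieces of $\Norm{v}{\deltax}^2=\frac{2}{\deltax}\sum_j|v_{j+1}-v_j|^2+\deltax\sum_j|v_j|^2$: the weights $\deltax$ and $\deltax^{-1}$ cancel, and the arithmetic--geometric mean inequality gives $|v_m|^2\leq\frac{1}{\sqrt2}\Norm{v}{\deltax}^2$, uniformly in $m$, $\deltax$ and $K$. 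This cancellation is the one step that requires care; the rest is bookkeeping.

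With the embedding in hand, write $(v\bullet w)_{j+1}-(v\bullet w)_j=v_{j+1}(w_{j+1}-w_j)+(v_{j+1}-v_j)w_j$, so that
\[
\sum_{j\in\Z}\bigl|(v\bullet w)_{j+1}-(v\bullet w)_j\bigr|^2\;\leq\;2\|v\|_{\ell^\infty}^2\sum_{j\in\Z}|w_{j+1}-w_j|^2+2\|w\|_{\ell^\infty}^2\sum_{j\in\Z}|v_{j+1}-v_j|^2 ,
\]
while $\deltax\sum_j|v_jw_j|^2\leq\|w\|_{\ell^\infty}^2\,\deltax\sum_j|v_j|^2$. Multiplying the first bound by $2/\deltax$, keeping the second as is, adding, and inserting the discrete Sobolev inequality for $\|v\|_{\ell^\infty}$ and $\|w\|_{\ell^\infty}$ gives $\Norm{v\bullet w}{\deltax}^2\leq C\,\Norm{v}{\deltax}^2\Norm{w}{\deltax}^2$ with $C$ depending only on the absolute constant above, hence independent of $\deltax$ and $K$, which is the claim. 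One could instead try to transfer the statement through the isomorphism $i_\deltax$ of \eqref{eq:ih} using that $H^1(\R)$ is an algebra, but $i_\deltax$ does not intertwine the elementwise product with the pointwise product of piecewise-linear functions, so the direct argument above is cleaner.
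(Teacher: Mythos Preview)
Your proof is correct, and its overall architecture---discrete Leibniz rule for the difference part, pointwise $\ell^\infty$ bound for the remaining factors---is exactly the paper's. The one genuine difference is how the discrete Sobolev embedding $\|v\|_{\ell^\infty}\leq C\Norm{v}{\deltax}$ is obtained. You prove it directly by telescoping and Cauchy--Schwarz, which is self-contained and makes the $\deltax$-cancellation fully explicit. The paper instead observes that $\|v\|_{\ell^\infty}=\|i_\deltax(v)\|_{L^\infty(\R)}$ (the piecewise-linear interpolant attains its extrema at the nodes), invokes the continuous Morrey embedding $H^1(\R)\hookrightarrow L^\infty(\R)$, and then uses the uniform norm equivalence \eqref{eq:equiv}. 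So your closing remark is slightly off target: the paper \emph{does} go through $i_\deltax$, just not to transfer the product structure (where your objection would apply) but only to transfer the $L^\infty$ bound, which is harmless. Your direct argument buys independence from the embedding machinery and gives an explicit constant; the paper's route buys brevity once \eqref{eq:equiv} is in place.
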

\begin{proof} We have
 \begin{align*}
     \Norm{v\bullet w}{\deltax}^2&=2\deltax \sum_{j\in\Z} \frac{|v_{j+1}w_{j+1}-v_jw_j |^2}{\deltax^2}+\deltax \sum_{j\in\Z} |v_j|^2|w_j|^2\\
     &\leq 2\deltax \sum_{j\in\Z} \frac{\left(|w_{j+1}||v_{j+1}-v_j|+|v_{j}||w_{j+1}-w_j|\right)^2}{\deltax^2}+\deltax \sum_{j\in\Z} |v_j|^2|w_j|^2\\
     &\leq 4\deltax \sum_{j\in\Z} \frac{|w_{j+1}|^2|v_{j+1}-v_j|^2+|v_{j}|^2|w_{j+1}-w_j|^2}{\deltax^2}+\deltax \sum_{j\in\Z} |v_j|^2|w_j|^2\\
     &\leq 2 \Norm{v}{\ell^\infty}^2\Norm{w}{\deltax}^2+2 \Norm{w}{\ell^\infty}^2\Norm{v}{\deltax}^2.
 \end{align*}
 Thus we have, using the definition \eqref{eq:ih} of $i_\deltax$, that there is a constant $C>0$ independent of $v,w,K$ such
 \begin{align*}
     \Norm{v\bullet w}{\deltax}\leq 2 \left(\Norm{i_\deltax(v)}{L^\infty(\mathbb{R})}\Norm{w}{\deltax}+\Norm{i_\deltax(w)}{L^\infty(\mathbb{R})}\Norm{v}{\deltax}\right).
 \end{align*}
 By Morrey's inequality it follows that for some $C>0$ independent of $v,w,K$ we have
 \begin{align*}
      \Norm{v\bullet w}{\deltax}\leq C\left(\Norm{i_\deltax(v)}{H^1(\mathbb{R})}\Norm{w}{\deltax}+\Norm{i_\deltax(w)}{H^1(\mathbb{R})}\Norm{v}{\deltax}\right).
 \end{align*}
 Thus the result follows by equivalence of the norms $v\mapsto \Norm{v}{\deltax}$ and $v\mapsto\Norm{i_{\deltax}(v)}{H^1(\mathbb{R})}$ on $V_{\deltax}$.
\end{proof}

\subsection{Hamiltonian formulation}
We use the following standard Hamiltonian coordinates $u=\frac{1}{\sqrt{2}}(p+iq)$. This translates to a coordinate-wise identification once we consider the values of $u_j$ at the node $j \in \Z$. 
With this complex representation we associate the derivatives 
$$
\frac{\partial}{\partial u_j} = \frac{1}{\sqrt{2}}\left(\frac{\partial} {\partial p_j} - i \frac{\partial} {\partial q_j} \right)\quad \mbox{and}
\quad 
\frac{\partial}{\partial \bar u_j} = \frac{1}{\sqrt{2}}\left(\frac{\partial} {\partial p_j} + i \frac{\partial} {\partial q_j} \right), \quad j \in \Z. 
$$
Any function $H(p,q)$ from $\R^{2K+1} \times \R^{2K+1} \to \R$ can be viewed as a function $H(u)$ defined on $\C^{2K+1}$ and taking real values\footnote{Note that with this identification, $H$ is in fact a function of $u$ and $\bar u$ and not a holomorphic function of $u$}. 
For such a Hamiltonian function $H:\mathbb{C}^{2K+1}\rightarrow \mathbb{R}$ we can then consider the vector 
\begin{align*}
    \nabla_{\overline{u}}H(u)\equiv \frac{1}{\sqrt{2}}\begin{pmatrix}\partial_p H\\\partial_q H\end{pmatrix}
\end{align*}
which allows us to introduce the vector field associated with a Hamiltonian function:
\begin{definition}[Hamiltonian formulation]
   The vector field, $X_H:\mathbb{C}^{2K+1}\rightarrow\mathbb{C}^{2K+1}$, associated with a Hamiltonian function $H$, is given by 
   \begin{align*}
       X_H(u):=i\deltax^{-1}\nabla_{\overline{u}}H,
   \end{align*}
   which in $(p,q)$-coordinates corresponds to
   \begin{align*}
       \deltax^{-1}\frac{1}{\sqrt{2}}J^{-1}\begin{pmatrix}\partial_p H\\\partial_q H\end{pmatrix},
       \quad J=\begin{pmatrix}0& I\\
    -I&0
    \end{pmatrix}\quad\mbox{with} \quad J^2 = - I,
   \end{align*}
and $I$ denoting the $(2K+1)\times (2K+1)$ identity. The Hamiltonian system associated with the function $H$ is 
\begin{align}\label{eqn:Hamiltonian_formulation_u_coords}
  \frac{\dd u}{\dd t}=X_H(u)=i\deltax^{-1}\nabla_{\overline{u}}H(u),
\end{align}
which is equivalent to the real system 
$$
\frac{d}{d t}
\begin{pmatrix}
p \\ q 
\end{pmatrix}
 = \delta x^{-1}J^{-1}\begin{pmatrix}\partial_p H\\\partial_q H\end{pmatrix} 
 = \delta x^{-1} \begin{pmatrix} -\partial_q H\\\partial_p H\end{pmatrix}.
$$
\end{definition}
\begin{remark}
    The scaling factor $\deltax^{-1}$ in the Hamiltonian formulation is included to make $\deltax^{-1}\nabla_{\bar{u}}$ consistent with a variational derivative in the limit $\deltax\rightarrow 0$. For example consider the functional $\Nc(u):=\int_{\mathbb{R}}|u(x)|^2\dd x$, whose discrete analogue in our setting \eqref{eqn:discrete_NLSE} is
    $N(u)=\deltax\sum_{j=-K}^K |u_j|^2$. The functional derivative of $\Nc$ is given by 
    \begin{align*}
    \delta_{\bar{u}}\Nc=u,
    \end{align*}
    while
    \begin{align*}
    \left(\nabla_{\bar{u}} N\right)_j=\frac{\partial N}{\partial \bar{u}_j}=(\delta x) u_j,
    \end{align*}
    ensuring that the correct scaling in the Hamiltonian formulation is indeed \eqref{eqn:Hamiltonian_formulation_u_coords}.
\end{remark}
\begin{definition}\label{def:symplectic_map}
 In this notation we then call a map $\Phi:\mathbb{C}^{2K+1}\rightarrow\mathbb{C}^{2K+1}$ symplectic if its Jacobian
\begin{align*}
    M=\begin{pmatrix}
        \frac{\partial \Re\Phi}{\partial p}& \frac{\partial \Re\Phi}{\partial q}\\
        \frac{\partial \Im\Phi}{\partial p}&\frac{\partial \Im\Phi}{\partial q}
    \end{pmatrix}
\end{align*}
satisfies
\begin{align}\label{eqn:def_symplecticity}
    M^TJM=J, \quad J=\begin{pmatrix}0& I\\
    -I&0
    \end{pmatrix}.
\end{align}
\end{definition}
\begin{remark}\label{rmk:symplectic_properties}
We can check (see \cite{hairer2013geometric}) that the midpoint rule \eqref{eqn:midpoint_rule_nlse} is symplectic in this sense (provided the solution is well defined as solution of an implicit system, see below). Moreover, the composition of symplectic maps is clearly symplectic.
\end{remark}
\begin{definition}[Commutator of vector fields]
For two real vector fields $X,Y$ from $\R^{2K+1} \times \R^{2K+1}$ to itself, we define the usual commutator $[X,Y]$ as follows
\begin{align*}
    [X,Y]=&\sum_{j=-K}^{K}\left(X_{j}^{(p)}\frac{\partial}{\partial_{p_j}}+X_{j}^{(q)}\frac{\partial}{\partial_{q_j}}\right)\left(Y_{j}^{(p)}\frac{\partial}{\partial_{p_j}}+Y_{j}^{(q)}\frac{\partial}{\partial_{q_j}}\right)\\
    &\quad\quad\quad\quad\quad-\left(Y_{j}^{(p)}\frac{\partial}{\partial_{p_j}}+Y_{j}^{(q)}\frac{\partial}{\partial_{q_j}}\right)\left(X_{j}^{(p)}\frac{\partial}{\partial_{p_j}}+X_{j}^{(q)}\frac{\partial}{\partial_{q_j}}\right),
\end{align*}
where $X_j^{(p)}$ and $X_j^{(q)}$ denote the $p_j$ and $q_j$ components of $X$ respectively. 

\end{definition}
\begin{definition}\label{def:Poisson_bracket}
The natural Poisson bracket in this formulation is given by
\begin{align*}
	\{F,G\}:=\deltax^{-1}\sum_{j=-K}^{K}\frac{\partial F}{\partial p_j}\frac{\partial K}{\partial q_j}-\frac{\partial G}{\partial p_j}\frac{\partial F}{\partial q_j}.
\end{align*}
\end{definition}
\begin{remark}
    The scaling in the Poisson bracket is important for consistency, as it ensures (as can be easily verified in the $(p,q)$-coordinates), that for two Hamiltonian functions $P,Q$,
    \begin{align*}
        [X_P,X_Q]=X_{\{P,Q\}}.
    \end{align*}
\end{remark}
\begin{remark}
We can also check that with the aforementioned scalings we have, as usual, that for any smooth function $g:\mathbb{C}^{2K+1}\rightarrow \mathbb{C}$ 
\begin{align*}
    \frac{\dd g(u)}{\dd t}=\{H,g\}.
\end{align*}
\end{remark}

Note we will in the following often switch between the formulation in $u\in \mathbb{C}^{2K+1}$ and in $(q,p)\in\mathbb{R}^{2K+1}\times\mathbb{R}^{2K+1}$. For this it is helpful to keep the identification $iu\equiv J^{-1}(q,p)$ in mind.

\subsection{Estimating polynomial vector fields} In order to establish the desired truncation bounds on the modified energy for the midpoint rule we have to introduce a suitable framework for estimating commutators of polynomial vector fields. For this we shall use the following notation introduced in \cite[Section 7.2]{bambusi2013existence}. Suppose $X$ is a vector field on $V_{\deltax}$ which is a homogeneous polynomial of degree $s$. Then we can associate (by polarization) with $X$ a symmetric mulilinear form $\tilde{X}(\psi_1,\dots,\psi_s)$ such that for all $\psi\in V_{\deltax}$, $X(\psi)=\tilde{X}(\psi,\dots,\psi)$. This symmetric multilinear form is given by 
\begin{align}\label{eqn:multilinear_form_tildeX}
    \tilde{X}(\psi_1,\dots,\psi_s) := \frac{1}{s!} \sum_{k=1}^s\sum_{1\leq j_1<\cdots<j_k\leq s}(-1)^{s-k}X(\psi_{j_1}+\cdots+\psi_{j_k}).
\end{align}
We can then define the following operator norm on such homogeneous polynomials
\begin{align}\label{eqn:norm_poly_vec_fields}
\Norm{X}{\deltax}=\sup_{\Norm{\psi_j}{\deltax}=1, j=1,\dots, s}\Norm{\tilde{X}(\psi_1,\dots,\psi_{s})}{\deltax} = \sup_{\Norm{\psi}{\deltax}=1}\Norm{X(\psi)}{\deltax},
\end{align}
see for instance \cite[Proposition 2]{Bochnak1971PolynomialsAM}. For notational convenience we introduce the following space.
\begin{definition}
    We denote by $\mathcal{P}_{s}$ the space of all polynomial vector fields $X$ of degree no larger than $s$ such that $\Norm{X}{\deltax}$ is uniformly bounded in both $K>0$ and $\deltax>0$.
\end{definition}
The norm \eqref{eqn:norm_poly_vec_fields} can then be extended to $\mathcal{P}_s$ by simply defining $\Norm{\,\cdot\,}{\deltax}$ of a general polynomial to be the sum of the norms applied to the homogeneous components. Note when $s=1$ the norm reduces to the usual operator norm on the normed vector space $V_{\deltax}$. Using \eqref{eqn:norm_poly_vec_fields} we can establish the following commutator estimate:
\begin{lemma}[{See Lemma 7.6 in \cite{bambusi2013existence}}]
\label{lem:estimates_multilinear_norm}
    Suppose $X\in\mathcal{P}_{s_1},Y\in\mathcal{P}_{s_2}$ are two polynomial vector fields. Then $[X,Y]\in \mathcal{P}_{s_1+s_2-1}$ and
\begin{align}\label{eqn:estimate_poisson_bracket}
    \Norm{[X,Y]}{\deltax}\leq (s_1+s_2)\Norm{X}{\deltax}\Norm{Y}{\deltax}.
\end{align}
\end{lemma}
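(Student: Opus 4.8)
The plan is to prove the commutator estimate \eqref{eqn:estimate_poisson_bracket} by reducing to the homogeneous case and estimating the symmetric multilinear form of $[X,Y]$ directly. First I would recall that by bilinearity of the bracket $[\cdot,\cdot]$ and the definition of $\Norm{\cdot}{\deltax}$ on $\Pc_s$ as the sum of norms of homogeneous components, it suffices to treat the case where $X$ is homogeneous of degree $s_1$ and $Y$ is homogeneous of degree $s_2$; the general bound then follows by summing over components and using $\sum_{a\le s_1}\sum_{b\le s_2}(a+b)\Norm{X_a}{\deltax}\Norm{Y_b}{\deltax}\le (s_1+s_2)\Norm{X}{\deltax}\Norm{Y}{\deltax}$. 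In the homogeneous case $[X,Y]$ is a homogeneous polynomial vector field of degree $s_1+s_2-1$, so membership in $\Pc_{s_1+s_2-1}$ will follow once the norm bound is in place.

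Next I would compute $[X,Y]$ in terms of the multilinear forms. Writing $X(\psi)=\tilde X(\psi,\dots,\psi)$ and $Y(\psi)=\tilde Y(\psi,\dots,\psi)$, the directional derivative gives $(DY(\psi))X(\psi) = s_2\,\tilde Y\big(X(\psi),\psi,\dots,\psi\big)$ with $s_2-1$ copies of $\psi$, and symmetrically for the other term, so that
\begin{align*}
[X,Y](\psi) = s_1\,\tilde X\big(Y(\psi),\psi,\dots,\psi\big) - s_2\,\tilde Y\big(X(\psi),\psi,\dots,\psi\big).
\end{align*}
Each term is a homogeneous polynomial of degree $s_1+s_2-1$ in $\psi$, and by the algebra/submultiplicativity of the operator norm in \eqref{eqn:norm_poly_vec_fields} we get $\Norm{s_1\,\tilde X(Y(\psi),\psi,\dots,\psi)}{\deltax}\le s_1\Norm{X}{\deltax}\Norm{Y}{\deltax}$ on the unit sphere, and likewise $s_2\Norm{X}{\deltax}\Norm{Y}{\deltax}$ for the second, giving the claimed $(s_1+s_2)\Norm{X}{\deltax}\Norm{Y}{\deltax}$ after the triangle inequality. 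The one point requiring care is that \eqref{eqn:norm_poly_vec_fields} controls $\Norm{\tilde X(\psi_1,\dots,\psi_{s_1})}{\deltax}$ by $\prod_j\Norm{\psi_j}{\deltax}$, so plugging $\psi_1=Y(\psi)$ (with $\Norm{Y(\psi)}{\deltax}\le\Norm{Y}{\deltax}$ when $\Norm{\psi}{\deltax}=1$) and $\psi_2=\dots=\psi_{s_1}=\psi$ is exactly what is needed.

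The main obstacle, and the reason this is stated as a cited lemma rather than proved in one line, is justifying that the operator norm defined via the supremum over the diagonal $\psi_1=\dots=\psi_s=\psi$ in \eqref{eqn:norm_poly_vec_fields} genuinely equals (or is comparable to) the full multilinear supremum, i.e. the equivalence asserted in \eqref{eqn:norm_poly_vec_fields} itself with reference to \cite[Proposition 2]{Bochnak1971PolynomialsAM}; one must invoke this polarization estimate to pass from the bound on $X(\psi)$ to a bound on $\tilde X$ evaluated at distinct arguments, which is what makes the substitution $\psi_1 = Y(\psi)$ legitimate. Once that equivalence is granted (as it is in the cited references), the rest is the routine Leibniz-rule computation above, and I would simply record that the degree bookkeeping gives $[X,Y]\in\Pc_{s_1+s_2-1}$.
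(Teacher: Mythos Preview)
Your argument is correct and is essentially the standard proof one finds in the cited reference. Note that the paper itself does not give an independent proof here: it simply refers the reader to Lemma~7.6 in \cite{bambusi2013existence}. Your sketch actually fills in the details: reduce to homogeneous $X,Y$, use $DX(\psi)v=s_1\tilde X(v,\psi,\dots,\psi)$ to expand the bracket, and then invoke the equality of the diagonal and full multilinear suprema (the Bochnak--Siciak result already recorded in \eqref{eqn:norm_poly_vec_fields}) to bound $\Norm{\tilde X(Y(\psi),\psi,\dots,\psi)}{\deltax}\le \Norm{X}{\deltax}\Norm{Y}{\deltax}$ on the unit sphere. The reduction to homogeneous components via $\sum_{a,b}(a+b)\Norm{X_a}{\deltax}\Norm{Y_b}{\deltax}\le (s_1+s_2)\Norm{X}{\deltax}\Norm{Y}{\deltax}$ is also fine. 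One cosmetic remark: with the paper's convention $[X,Y]=(DY)X-(DX)Y$, your displayed formula for $[X,Y](\psi)$ has the overall sign flipped, but this is immaterial for the norm bound.
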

\begin{proof}
    The proof of this statement is given in Lemma~7.6 \cite{bambusi2013existence}.
\end{proof}
\subsection{Vector fields and Hamiltonian functions}\label{sec:vector_fields_and_Hamiltonian_fns}
As usual there is a one-to-one correspondence between Hamiltonian functions and associated vector fields. We have already seen the definition of $X_H$ and in the following lemma we shall construct $H$ from $X$.
\begin{lemma}\label{lem:integration_symmetric_jacobian} Let $f:\mathbb{C}^{2K+1}\rightarrow\mathbb{C}^{2K+1}$ be a homogeneous polynomial vector field such that the matrix
\begin{align}\label{eqn:JtimesJacobian}
    J\begin{pmatrix}
        \frac{\partial \Re f}{\partial p}& \frac{\partial \Re f}{\partial q}\\[1ex]
        \frac{\partial \Im f}{\partial p}&\frac{\partial \Im f}{\partial q}
    \end{pmatrix}=\begin{pmatrix}
        \frac{\partial \Im f}{\partial p}&\frac{\partial \Im f}{\partial q}\\[1ex]
        -\frac{\partial \Re f}{\partial p}& -\frac{\partial \Re f}{\partial q}
    \end{pmatrix},
\end{align}
represented here in block-matrix notation, where $p=\Re(u)\in\mathbb{R}^{2K+1}, q=\Im(u)\in\mathbb{R}^{2K+1}$, is symmetric. Suppose further that, for given $C>0$ independent of $K$, $d\in\mathbb{N}_{\geq 1}$, $f$ satisfies the bound
\begin{align*}
    \Norm{f(u)}{\delta x}\leq C\Norm{u}{\deltax}^d,\quad\forall u\in V_{\delta x}.
\end{align*}
Then there is a \textit{real-valued} homogeneous polynomial $P:\mathbb{C}^{2K+1}\rightarrow\mathbb{R}$ such that for all $u\in\mathbb{C}^{2K+1}$
\begin{align}\label{eqn:hamiltonian_associated_with_f}
    f(u)=X_H=i\deltax^{-1}\nabla_{\overline{u}}P(u),
\end{align}
and there is $\tilde{C}>0$, independent of $K$, such that 
\begin{align}\label{eqn:bound_on_hamiltonian}
    |P(u)|\leq \tilde{C}\Norm{u}{\deltax}^{d+1},\quad \forall u \in V_{\delta x}.
\end{align}
\end{lemma}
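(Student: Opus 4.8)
The plan is to recognise the symmetry hypothesis \eqref{eqn:JtimesJacobian} as the integrability (closedness) condition for a $1$-form on $\R^{2(2K+1)}$, to invoke the Poincar\'e lemma in its explicit form for homogeneous polynomials, and then to track the constants carefully through the real--complex dictionary.

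\emph{Reduction.} Work in the real coordinates $z=(p,q)\in\R^{2(2K+1)}$ and set $\hat f=(\Re f,\Im f)$, a real polynomial vector field on $\R^{2(2K+1)}$, homogeneous of degree $d$ (the degree is forced to be $d$ since $f$ is homogeneous and $\Norm{f(u)}{\deltax}\le C\Norm{u}{\deltax}^{d}$). By the definition of the Hamiltonian vector field in $(p,q)$-coordinates, the requirement $f=X_P$ is equivalent to $\nabla_{(p,q)}P=\deltax\,J\,\hat f=:g$. The hypothesis \eqref{eqn:JtimesJacobian} says exactly that $\mathrm{Jac}(g)=\deltax\,J\,\mathrm{Jac}(\hat f)$ is symmetric, i.e.\ $\partial_{z_i}g_j=\partial_{z_j}g_i$ for all $i,j$, which is the integrability condition for $g$ to be a gradient.

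\emph{Construction of $P$.} Since $g$ is a real homogeneous polynomial vector field of degree $d$ with symmetric Jacobian, define the real homogeneous polynomial of degree $d+1$
\[
P(z):=\tfrac{1}{d+1}\,\langle g(z),z\rangle_{\R^{2(2K+1)}}.
\]
Then $\partial_{z_i}P=\tfrac{1}{d+1}\big(\sum_j z_j\,\partial_{z_i}g_j+g_i\big)=\tfrac{1}{d+1}\big(\sum_j z_j\,\partial_{z_j}g_i+g_i\big)=\tfrac{1}{d+1}\big(d\,g_i+g_i\big)=g_i$, where the second equality uses the symmetry of $\mathrm{Jac}(g)$ and the third uses Euler's identity for the homogeneous function $g_i$. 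Hence $\nabla_{(p,q)}P=g$, that is $f=X_P=i\deltax^{-1}\nabla_{\bar u}P$, and $P$ is real-valued and homogeneous of degree $d+1$, as required.

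\emph{The estimate.} Rewriting the primitive in complex notation, $\langle g(z),z\rangle=\deltax\,\langle J\hat f,z\rangle=\deltax\,\Im\big(\overline{u}^{T}f(u)\big)$, so $P(u)=\tfrac{\deltax}{d+1}\Im\big(\overline{u}^{T}f(u)\big)$. Cauchy--Schwarz on $\C^{2K+1}$ gives $|P(u)|\le\tfrac{\deltax}{d+1}\Norm{u}{\ell^2}\Norm{f(u)}{\ell^2}$. Since $A$ is positive semidefinite, $\deltax\,\Norm{v}{\ell^2}^2=N(v)\le\Norm{v}{\deltax}^2$ for every $v\in V_{\deltax}$, i.e.\ $\sqrt{\deltax}\,\Norm{v}{\ell^2}\le\Norm{v}{\deltax}$; applying this with $v=u$ and with $v=f(u)$ and invoking $\Norm{f(u)}{\deltax}\le C\Norm{u}{\deltax}^{d}$ yields $|P(u)|\le\tfrac{C}{d+1}\Norm{u}{\deltax}^{d+1}$, so one may take $\tilde C=C/(d+1)$, independent of $K$ and $\deltax$.

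\emph{Main obstacle.} There is essentially no analytic difficulty here: the Poincar\'e-lemma step is immediate for homogeneous polynomials through the explicit primitive $P=\tfrac1{d+1}\langle g,z\rangle$, and dimension-independence of $\tilde C$ rests on the single dimension-free inequality $\sqrt{\deltax}\,\Norm{v}{\ell^2}\le\Norm{v}{\deltax}$ (which is just $I+A\succeq I$). The one point requiring care is keeping the real/complex and scaled/unscaled identifications consistent so that no spurious factor of $K$ or $\deltax$ slips into the final constant.
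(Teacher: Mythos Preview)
Your proof is correct and follows essentially the same route as the paper: both construct $P$ via the Poincar\'e-lemma primitive $P(u)\propto\delta x\,\Im(\bar u^{T}f(u))$ (the paper writes it as $2\delta x\int_0^1\Im(\bar u^{T}f(tu))\,dt$, which by homogeneity $f(tu)=t^{d}f(u)$ collapses to your formula) and bound it through $|\delta x\,\bar u^{T}v|\le\Norm{u}{\delta x}\Norm{v}{\delta x}$. Your verification of $\nabla P=g$ via Euler's identity is a slight streamlining of the paper's direct coordinate computation, and your estimate via $\sqrt{\delta x}\,\Norm{v}{\ell^2}\le\Norm{v}{\delta x}$ is in fact cleaner than the corresponding step in the paper.
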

\begin{remark}
    By slight abuse of notation we shall in the following write
    \begin{align}\label{eqn:notation_Jacobian}
        \nabla_{\bar{u}}^Tf\equiv\frac{1}{\sqrt{2}}\begin{pmatrix}
            \frac{\partial \Re f}{\partial p}& \frac{\partial \Re f}{\partial q}\\
        \frac{\partial \Im f}{\partial p}&\frac{\partial \Im f}{\partial q}
        \end{pmatrix}.
    \end{align}
\end{remark}
\begin{proof}[Proof of Lemma~\ref{lem:integration_symmetric_jacobian}] The proof is based on \cite[Lemma VI.2.7]{hairer2013geometric}. We define
\begin{align*}
    P(u):=2 {\deltax}\int_{0}^1\Im(\overline{u}^Tf(tu))\dd t,
\end{align*}
which clearly is a \textit{real-valued} homogeneous polynomial. Moreover, we have with $u = \frac{1}{\sqrt{2}}(p + iq)$  and using summation convention over repeated indices
\begin{align*}
    \left(i{\deltax^{-1}}\nabla_{\overline{u}}P\right)_{j}&=i\left(\frac{\partial}{\partial p_j}+i\frac{\partial}{\partial q_j}\right)\int_{0}^1 p_\ell\Im f_l(tu)-q_\ell\Re f_\ell(tu)\dd t\\
    &=\int_{0}^1\left(i\frac{\partial}{\partial p_j}-\frac{\partial}{\partial q_j}\right)\left(p_\ell\Im f_\ell(tu)-q_\ell\Re f_\ell(tu)\right)\dd t\\
    &=\int_0^1i\Im f_j(tu)+itp_\ell\frac{\partial \Im f_\ell}{\partial p_j}\Big\vert_{tu}-itq_\ell\frac{\partial \Re f_\ell}{\partial p_j}\Big\vert_{tu}\dd t\\
    &\quad+\int_0^1-tp_\ell\frac{\partial \Im f_\ell}{\partial q_j}\Big\vert_{tu}+\Re f_j(tu)+tq_\ell\frac{\partial\Re f_\ell}{\partial q_j}\Big\vert_{tu}\dd t.
\end{align*}
Using the symmetry of \eqref{eqn:JtimesJacobian} this simplifies to
\begin{align*}
    \left(i{\deltax^{-1}}\nabla_{\overline{u}}P\right)_{j}&=\int_0^1i\Im f_j(tu)+itq_\ell\frac{\partial \Im f_j}{\partial q_\ell}\Big\vert_{tu}+itp_\ell\frac{\partial \Im f_j}{\partial p_\ell}\Big\vert_{tu}\dd t\\
    &\quad+\int_0^1tq_\ell\frac{\partial \Re f_j}{\partial q_\ell}\Big\vert_{tu}+\Re f_j(tu)+tp_\ell\frac{\partial\Re f_j}{\partial p_\ell}\Big\vert_{tu}\dd t\\
    &=\int_0^1\frac{\dd}{\dd t}\left(t f_j(tu)\right)\dd t=f_j(tu).
\end{align*}
This completes the proof of \eqref{eqn:hamiltonian_associated_with_f}. For the bound \eqref{eqn:bound_on_hamiltonian} we note that for all $u\in B_{\deltax}(R)$
\begin{align*}
    |P(u)|&=2 \left|\int_{0}^1{\deltax}\Im(\overline{u}^Tf(tu))\dd t\right|\\
    &\leq 2 \sup_{t\in[0,1]}\left|{\deltax}\overline{u}^Tf(tu)\right|\leq 2 \Norm{\overline{u}}{\deltax}\sup_{t\in[0,1]}\Norm{f(tu)}{\deltax}\\
    &\leq 2C\Norm{\overline{u}}{\deltax}\sup_{t\in[0,1]}t^{d}\Norm{u^{d}}{\deltax},
\end{align*}
where we used the fact that for $u,v \in \C^{2K+1}$, 
$$
|\deltax \overline u^T v| \leq |\langle u,v\rangle_{\deltax}| \leq \Norm{u}{\deltax} \Norm{v}{\deltax}. 
$$ 
Thus the result follows from Lemma~\ref{lem:V_h_is_algebra}.
\end{proof}
The following stability estimate will also prove useful in Section~\ref{sec:analytic_estimates}.
\begin{lemma}\label{lem:stability_estimate_polynomial_vector_field}
    Suppose $P$ is a polynomial of degree $k$ such that $|P(u)|\leq C \Norm{u}{\deltax}^k$ then
    \begin{align*}
        \Norm{X_{P}(z)-X_{P}(y)}{\deltax}\leq C_kC\left(\max_{n=1,\dots, k-2}(\Norm{y}{\deltax}^n,\Norm{z}{\deltax}^n\right)\Norm{z-y}{\deltax}.
    \end{align*}
\end{lemma}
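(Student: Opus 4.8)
The plan is to split the argument into two parts: first controlling the operator norm $\Norm{X_P}{\deltax}$ of the Hamiltonian vector field, and then exploiting the multilinear structure of $X_P$ through a telescoping identity. Since $P$ is a homogeneous polynomial of degree $k$, its vector field $X_P=i\deltax^{-1}\nabla_{\bar u}P$ is a homogeneous polynomial vector field of degree $k-1$; the first step is to establish that $X_P\in\Pc_{k-1}$ with $\Norm{X_P}{\deltax}\le C_kC$, where $C_k$ depends only on $k$. This amounts to a Cauchy-type estimate bounding a Hamiltonian gradient by the Hamiltonian itself: writing $P(u)=\tilde P(u,\dots,u)$ with $\tilde P$ the symmetric $k$-linear form obtained by polarisation as in \eqref{eqn:multilinear_form_tildeX}, the hypothesis $|P(u)|\le C\Norm{u}{\deltax}^k$ transfers to $|\tilde P(\psi_1,\dots,\psi_k)|\le C_kC\prod_j\Norm{\psi_j}{\deltax}$ (the content of \cite[Proposition~2]{Bochnak1971PolynomialsAM}, as already used for \eqref{eqn:norm_poly_vec_fields}), and differentiating $P$ expresses $\nabla_{\bar u}P$, hence $X_P$, through $\tilde P$ evaluated with $k-1$ copies of $u$; the aim is to read off from this that $\Norm{X_P(u)}{\deltax}\le C_kC\Norm{u}{\deltax}^{k-1}$, which by the second equality in \eqref{eqn:norm_poly_vec_fields} is $\Norm{X_P}{\deltax}\le C_kC$.

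Granting this, I would denote by $\widetilde{X_P}$ the symmetric $(k-1)$-linear form with $X_P(u)=\widetilde{X_P}(u,\dots,u)$ and expand the difference via the standard multilinear telescoping identity
$$
X_P(z)-X_P(y)=\sum_{j=0}^{k-2}\widetilde{X_P}\big(\underbrace{z,\dots,z}_{j},\ z-y,\ \underbrace{y,\dots,y}_{k-2-j}\big),
$$
which consists of $k-1$ terms. Estimating each term by the operator-norm bound from \eqref{eqn:norm_poly_vec_fields} gives
$$
\Norm{X_P(z)-X_P(y)}{\deltax}\ \le\ \Norm{X_P}{\deltax}\,\Norm{z-y}{\deltax}\sum_{j=0}^{k-2}\Norm{z}{\deltax}^{j}\Norm{y}{\deltax}^{k-2-j}.
$$
It then remains to observe the elementary bound $\Norm{z}{\deltax}^{j}\Norm{y}{\deltax}^{k-2-j}\le\max\{\Norm{z}{\deltax},\Norm{y}{\deltax}\}^{k-2}\le\max_{n=1,\dots,k-2}(\Norm{y}{\deltax}^{n},\Norm{z}{\deltax}^{n})$, valid for every $0\le j\le k-2$ irrespective of whether the norms exceed $1$, and to sum the $k-1$ terms; this yields the assertion with $C_k$ replaced by $(k-1)C_k$. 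The low-degree cases are immediate: for $k=1$ the field $X_P$ is constant and the left-hand side vanishes, and for $k=2$ it is linear, so $X_P(z)-X_P(y)=X_P(z-y)$ and the claim is just $\Norm{X_P(z-y)}{\deltax}\le\Norm{X_P}{\deltax}\Norm{z-y}{\deltax}$.

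The genuine difficulty is entirely in the first step: obtaining $\Norm{X_P}{\deltax}\le C_kC$ with a constant \emph{independent of $K$ and $\deltax$}. In any fixed finite dimension the passage from a bounded Hamiltonian to a bounded Hamiltonian vector field is harmless, but because $\nabla_{\bar u}$ carries the unbounded operator $A$ this is exactly the sort of estimate that can fail under mesh refinement; in practice the bound $|P(u)|\le C\Norm{u}{\deltax}^k$ should be read together with the fact that all polynomial Hamiltonians appearing here are produced from polynomial vector fields of controlled $\Norm{\cdot}{\deltax}$-size via Lemma~\ref{lem:integration_symmetric_jacobian} (and that commutators of such fields stay bounded, Lemma~\ref{lem:estimates_multilinear_norm}), so that $\Norm{X_P}{\deltax}$ is controlled directly. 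Once that uniform control is in hand, the telescoping identity, the operator-norm bound and the power counting in the second step are routine.
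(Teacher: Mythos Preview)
Your approach is essentially the paper's: the one-line proof there (``follows directly from the multilinear estimates \eqref{eqn:norm_poly_vec_fields}, see also Proposition~2.7 in \cite{faou2012geometric}'') is precisely the polarisation-plus-telescoping argument you spell out, so at the level of mechanics the two agree.

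Where your write-up adds real value is in the closing paragraph. You correctly flag that the passage from $|P(u)|\le C\Norm{u}{\deltax}^k$ to $\Norm{X_P}{\deltax}\le C_kC$ \emph{uniformly in $K,\deltax$} is not automatic: since $\Norm{\cdot}{\deltax}$ is a weighted norm (weight $I+A$) while $\nabla_{\bar u}$ is the plain coordinate gradient, a Hamiltonian such as $P(u)=\deltax\,\bar u^TAu$ satisfies $|P(u)|\le\Norm{u}{\deltax}^2$ yet has $X_P(u)=iAu$, whose $\Norm{\cdot}{\deltax}$-norm blows up like $\deltax^{-2}$. The lemma as literally stated therefore cannot hold for arbitrary polynomials, and the paper's terse proof glosses over this. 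Your resolution is the right one in context: every polynomial Hamiltonian to which the lemma is actually applied in the paper is obtained from a polynomial vector field of controlled $\Norm{\cdot}{\deltax}$-size via Lemma~\ref{lem:integration_symmetric_jacobian} (cf.\ \eqref{eqn:definition_of_P_{N+1,h}}), so $\Norm{X_P}{\deltax}$ is bounded by construction and the telescoping step then goes through exactly as you describe. In short, your argument is correct and matches the paper, and your caveat sharpens a point the paper leaves implicit.
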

\begin{proof} It follows directly from the multilinear estimates \eqref{eqn:norm_poly_vec_fields} (see also Proposition 2.7 in \cite{faou2012geometric}). 
\end{proof}
\section{The midpoint rule as a splitting method}\label{sec:midpoint_rule_as_splitting_method}
Our goal is to resort to tools introduced in \cite{faou2011hamiltonian,faou2012geometric} to establish dimension-independent error estimates in the energy {(i.e. estimates which, beyond a CFL constraint, do not depend on $\deltax$ and $K$)}. In order for this to work, we need to first establish the following result (where we denote $B_{\deltax}(R):=\{u\in V_{\deltax}\,\vert\, \Norm{u}{\deltax}\leq R\}$):
\begin{proposition}\label{prop:splitting_formulation_on_flows}
Let $R>0$ be given. 
There exists $\deltat_0(R)>0$ and, for all $\deltat \leq \deltat_0$, a symplectic map $u \mapsto \Psi^{h}(u)$ from $B_{\deltax}(R)$ to $B_{\deltax}(2R)$ such that the midpoint rule can be written in the split form 
\begin{equation}
\label{eqsplit}
u^{n+1} = R(\deltat A) \circ \Psi^\deltat (u^n)
\end{equation}
and moreover, we have for $u \in B_{\deltax}(R)$ 
\begin{equation}
\label{eqsplit0}
\Psi^\deltat (u)= u + \sum_{k \geq 1} \deltat^k \Psi_{\deltat,k} (u)
\end{equation}
where the $\Psi_{\deltat,k}(u)$ are homogeneous polynomial vector fields such that 
there exists a constant $C$ such that for all $\deltat \leq \deltat_0$ and all $u$ in $B_{\deltax}(R)$
\[
\Norm{\Psi_{\deltat,k}(u)}{\deltax} \leq C^k \Norm{u}{\deltax}^{2rk + 1},
\]
where $C$ does not depend on $N=2K+1$, the dimension of the ODE, nor on $\deltax$ the mesh size of the space discretisation.
\end{proposition}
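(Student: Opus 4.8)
The plan is to manipulate the implicit midpoint equation \eqref{eqn:midpoint_rule_nlse} algebraically so that the linear part is fully absorbed into the stability function $R(\deltat A)$, leaving a purely nonlinear implicit correction. Starting from $u^{n+1} = u^n + \frac{i\deltat}{2}A(u^{n+1}+u^n) + i\deltat f\!\left(\frac{u^{n+1}+u^n}{2}\right)$, I would collect the $u^{n+1}$ terms on the left to get $\left(I - \frac{i\deltat A}{2}\right)u^{n+1} = \left(I + \frac{i\deltat A}{2}\right)u^n + i\deltat f\!\left(\frac{u^{n+1}+u^n}{2}\right)$, hence
\[
u^{n+1} = R(\deltat A)\, u^n + i\deltat \left(I - \tfrac{i\deltat A}{2}\right)^{-1} f\!\left(\tfrac{u^{n+1}+u^n}{2}\right).
\]
The goal is to recognise the right-hand side as $R(\deltat A)\circ \Psi^\deltat(u^n)$, i.e. to find $w := \Psi^\deltat(u^n)$ with $R(\deltat A) w$ equal to the above. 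Since $R(\deltat A)$ is invertible (indeed unitary: $\left(I+\frac{i\deltat A}{2}\right)\left(I-\frac{i\deltat A}{2}\right)^{-1}$ has modulus one on each eigenspace, $A$ being real symmetric), write $w = u^n + i\deltat R(\deltat A)^{-1}\left(I - \frac{i\deltat A}{2}\right)^{-1} f\!\left(\frac{u^{n+1}+u^n}{2}\right) = u^n + i\deltat \left(I + \frac{i\deltat A}{2}\right)^{-1} f\!\left(\frac{u^{n+1}+u^n}{2}\right)$, using $R(\deltat A)^{-1}\left(I-\frac{i\deltat A}{2}\right)^{-1} = \left(I+\frac{i\deltat A}{2}\right)^{-1}$. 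It remains to express the argument $\frac{u^{n+1}+u^n}{2}$ in terms of $u^n$ alone; note $u^{n+1}+u^n = R(\deltat A)w + u^n$, so $\Psi^\deltat$ is characterised by the fixed-point equation
\[
w = u^n + i\deltat \left(I + \tfrac{i\deltat A}{2}\right)^{-1} f\!\left(\tfrac{R(\deltat A) w + u^n}{2}\right).
\]

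The next step is to solve this fixed-point equation by a Banach contraction argument in the ball $B_{\deltax}(2R)$. Here the crucial observation — and the point where dimension-independence enters — is that the operators $\left(I+\frac{i\deltat A}{2}\right)^{-1}$ and $R(\deltat A)$ are both bounded by $1$ in the operator norm on $V_{\deltax}$ associated with $\Norm{\cdot}{\deltax}$, uniformly in $\deltax$ and $K$. This requires a small check: $\Norm{\cdot}{\deltax}$ is the norm coming from the scalar product with weight $I+A$, and since $\left(I+\frac{i\deltat A}{2}\right)^{-1}$ and $R(\deltat A)$ commute with $A$ (all are functions of $A$), they are also contractions in the $(I+A)$-weighted norm — indeed any rational function of $A$ that is a contraction in the standard $\ell^2$ norm remains one after conjugating by a function of $A$. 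Combined with the algebra property of Lemma~\ref{lem:V_h_is_algebra} (which gives $\Norm{f(v)}{\deltax} \leq C_f \Norm{v}{\deltax}^{2r+1}$ and a local Lipschitz bound on $f$ with constant $\lesssim R^{2r}$ on $B_{\deltax}(2R)$, via Lemma~\ref{lem:stability_estimate_polynomial_vector_field}), the map $w \mapsto u^n + i\deltat (\cdots) f(\cdots)$ sends $B_{\deltax}(2R)$ into itself and is a contraction provided $\deltat \leq \deltat_0(R)$ with $\deltat_0$ depending only on $R$ and $C_f$. This yields a well-defined $\Psi^\deltat: B_{\deltax}(R) \to B_{\deltax}(2R)$, and symplecticity follows from Remark~\ref{rmk:symplectic_properties}: $\Psi^\deltat = R(\deltat A)^{-1}\circ\varphi_h$ is a composition of the symplectic midpoint map with the symplectic linear map $R(\deltat A)^{-1}$ (which is symplectic because it is the time-$\deltat$ flow of the linear Hamiltonian $v\mapsto \deltax\,\overline v^T \tfrac{2}{\deltat}\arctan(\tfrac{\deltat A}{2})v$, or checked directly from $M^TJM=J$).

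For the expansion \eqref{eqsplit0} and the bound on $\Psi_{\deltat,k}$, I would iterate the fixed-point map starting from $w^{(0)} = u^n$: each iteration adds a term carrying one more power of $\deltat$ and, because $f$ is a homogeneous polynomial of degree $2r+1$, raising the $\Norm{\cdot}{\deltax}$-degree by $2r$ per factor of $f$. More precisely, expanding $f\!\left(\tfrac{R(\deltat A)w + u^n}{2}\right)$ as a polynomial in $w$ and substituting recursively, one obtains $\Psi^\deltat(u) = u + \sum_{k\geq 1}\deltat^k \Psi_{\deltat,k}(u)$ where $\Psi_{\deltat,k}$ is a finite sum of compositions of $f$ (and its multilinear polarisations), the bounded operators $\left(I+\frac{i\deltat A}{2}\right)^{-1}$ and $R(\deltat A)$, applied to $u$; a term built from $j$ copies of $f$ contributes degree $2rj+1$ in $\Norm{u}{\deltax}$, and a combinatorial/generating-function count shows that for a given power $\deltat^k$ the dominant terms have $j = k$, giving degree $2rk+1$, with the lower-$j$ terms absorbed since $\Norm{u}{\deltax}\leq R$. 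The uniform bound $\Norm{\Psi_{\deltat,k}(u)}{\deltax}\leq C^k\Norm{u}{\deltax}^{2rk+1}$ then follows from the algebra estimate, the uniform operator bounds, and a Cauchy-estimate / majorant-series argument controlling the number of terms at each order (alternatively, extract $\Psi_{\deltat,k}$ as a contour integral $\frac{1}{2\pi i}\oint \Psi^\deltat(u)\,\deltat^{-k-1}\,\dd\deltat$ over a circle of radius $\sim \deltat_0$ and bound directly by $\sup_{B_{\deltax}(2R)}\Norm{\Psi^\deltat(u) - u}{\deltax}/\deltat_0^{k+1} \lesssim R/\deltat_0^{k+1}$, then refine to capture the correct homogeneity in $\Norm{u}{\deltax}$ by applying the same estimate on each ball $B_{\deltax}(\rho)$, $\rho\leq R$). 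I expect the main obstacle to be precisely this last bookkeeping step — pinning down the exact homogeneity degree $2rk+1$ (rather than a cruder bound) while keeping all constants independent of $K$ and $\deltax$ — which is where the homogeneity of the nonlinearity and the uniform algebra constant must be used together carefully.
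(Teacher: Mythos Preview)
Your setup --- rewriting the midpoint rule as $u^{n+1}=R(hA)w$ with $w$ solving the fixed-point equation $w=u+ih(I+\tfrac{ihA}{2})^{-1}f\big(\tfrac{R(hA)w+u}{2}\big)$, the contraction argument in $B_{\deltax}(2R)$ using the uniform operator bound for $(I+\tfrac{ihA}{2})^{-1}$ and the algebra property, and symplecticity via $\Psi^h=R(hA)^{-1}\circ\varphi_h$ --- is exactly what the paper does.

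The difference is in how the expansion and the homogeneous bound are obtained. The paper introduces an \emph{auxiliary parameter} $\varepsilon$ replacing $h$ only in front of the nonlinearity (keeping the $h$ inside $(I+\tfrac{ihA}{2})^{-1}$ and $R(hA)$ frozen), defines $\Psi^h_\varepsilon(u)$ as the fixed point of $v\mapsto u+\tfrac{i\varepsilon}{1+ihA/2}f\big(\tfrac{u+R(hA)v}{2}\big)$, shows analyticity in $\varepsilon$ with radius $\varepsilon_0=\tfrac12 C^{-1}\|u\|_{\deltax}^{-2r}$ (coming from the contraction constant, hence independent of $K,\deltax$), and then applies Cauchy's estimate with this $u$-dependent radius. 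Since $\sup_{|\varepsilon|=\varepsilon_0}\|\Psi^h_\varepsilon(u)\|_{\deltax}\leq 2\|u\|_{\deltax}$, the Cauchy bound immediately gives $\|\Psi_{h,k}(u)\|_{\deltax}\leq 2\|u\|_{\deltax}\,\varepsilon_0^{-k}\lesssim C^k\|u\|_{\deltax}^{2rk+1}$; the polynomial structure in $u$ is then verified by the recursive substitution you describe. Finally one sets $\varepsilon=h$.

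Your contour-integral alternative, as written, has a genuine problem: the coefficients $\Psi_{\deltat,k}$ themselves depend on $\deltat$ (through the operators), so they are \emph{not} the Taylor coefficients of $\deltat\mapsto\Psi^\deltat(u)$ and cannot be extracted by $\tfrac{1}{2\pi i}\oint\Psi^\deltat(u)\,\deltat^{-k-1}\dd\deltat$. Worse, even if one attempted this, $(I\pm\tfrac{ihA}{2})^{-1}$ has poles on the imaginary $h$-axis at distance $\sim\deltax^2$ from the origin, so the radius of analyticity in $h$ is not uniform in $\deltax$. The decoupling of the ``small parameter in front of $f$'' from the ``$h$ inside the operators'' is precisely what makes the paper's argument dimension-independent; your ``refine on each ball $B_{\deltax}(\rho)$'' idea points in the right direction but needs the $\varepsilon$-decoupling to be made rigorous.
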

To prove the above splitting formulation, we need a few lemmas.
\begin{lemma}\label{lem:RA_isometry}
    $R(\deltat A)$ is an isometry on $\Norm{\cdot}{\deltax} $.
\end{lemma}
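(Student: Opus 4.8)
The statement is that $R(\deltat A)$ preserves the discrete norm $\Norm{\cdot}{\deltax}$. The plan is to exploit the algebraic fact, already recorded in the paper, that
\[
\Norm{\psi}{\deltax}^2 = \deltax\,\overline{\psi}^T (I + A)\psi,
\]
so that $R(\deltat A)$ is an isometry precisely when it is unitary with respect to the inner product weighted by the positive self-adjoint matrix $I + A$. Since $A$ is real symmetric, it diagonalises as $A = U^{-1} D U$ with $U$ unitary and $D$ real diagonal, and $I+A$ commutes with $A$; the matrices $I+A$ and $R(\deltat A)$ are both functions of $A$ and hence commute. So the natural route is: reduce to the spectral/diagonal picture via $U$, and check the one-dimensional statement on each eigenvalue.

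First I would observe that $R(\deltat A) = (I - \tfrac{i\deltat}{2}A)^{-1}(I + \tfrac{i\deltat}{2}A)$ is well-defined, since $A$ is real symmetric so $i A$ has purely imaginary eigenvalues and $I \pm \tfrac{i\deltat}{2}A$ is invertible. Next, using the representation \eqref{eqn:def_RdeltatA}, $R(\deltat A) = U^{-1} \exp\!\bigl(2i\arctan(\tfrac{\deltat D}{2})\bigr) U$, where the matrix exponential is diagonal with entries of modulus one. Then for any $\psi \in V_{\deltax}$, writing $\phi = R(\deltat A)\psi$, I compute
\[
\Norm{\phi}{\deltax}^2 = \deltax\,\overline{\phi}^T(I+A)\phi = \deltax\,\overline{\psi}^T R(\deltat A)^* (I+A) R(\deltat A)\psi.
\]
Because $R(\deltat A)$ and $I+A$ are both functions of the real symmetric matrix $A$, in the $U$-basis everything is diagonal: $R(\deltat A)^*(I+A)R(\deltat A)$ has diagonal entries $\overline{e^{2i\arctan(\deltat d/2)}}\,(1+d)\,e^{2i\arctan(\deltat d/2)} = 1 + d$ for each eigenvalue $d$ of $A$, hence equals $I + A$. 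Therefore $\Norm{\phi}{\deltax}^2 = \Norm{\psi}{\deltax}^2$, which is the claim.

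I do not anticipate a genuine obstacle here — the result is essentially linear algebra. The only points requiring a little care are: (i) confirming the relevant matrices genuinely commute and hence are simultaneously diagonalisable (immediate since both are polynomials/rational functions of $A$), and (ii) making sure the norm $\Norm{\cdot}{\deltax}$ used in \eqref{eqn:bilinear_expression_discrete_norm} really is the $(I+A)$-weighted $\ell^2$ norm on the truncated space $V_{\deltax}$, so that the argument respects the Dirichlet truncation $u_j = 0$ for $|j| > K$; this is exactly how $\Norm{\cdot}{\deltax}$ was defined, so no issue arises. One could also phrase the whole thing abstractly by noting that $R(z) = \tfrac{1+iz/2}{1-iz/2}$ maps $\R$ to the unit circle, so $R(\deltat A)$ is unitary for the standard Hermitian structure in which $A$ is self-adjoint; since $I + A$ is a positive function of $A$, it defines an equivalent Hermitian structure in which $A$ is still self-adjoint, and $R(\deltat A)$ remains unitary for it. Either formulation gives a short, dimension-independent proof.
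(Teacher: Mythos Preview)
Your argument is correct and follows essentially the same route as the paper's own proof: both exploit that $R(\deltat A)$ and the Gram matrix $I+A$ defining $\Norm{\cdot}{\deltax}$ are functions of the symmetric matrix $A$ and hence simultaneously diagonalisable, with the eigenvalues of $R(\deltat A)$ lying on the unit circle. Your write-up is in fact more explicit than the paper's (which compresses this to a one-line remark), so there is nothing to add.
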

\begin{proof}
    The eigenvalues of $R(\deltat A)$ are all on $i\mathbb{R}$, and it can be jointly diagonalised with the norm $\Norm{\cdot}{\deltax} $, \em{cf}. \eqref{eqn:bilinear_expression_discrete_norm}.
\end{proof}

\begin{proof}[Proof of Proposition~\ref{prop:splitting_formulation_on_flows}]
	We can write the midpoint rule as 
	\begin{align*}
		u^{n+1} &= R(hA) u^{n} + \frac{i h}{1 - i hA/2} f\!\left(\frac{u^n + u^{n+1}}{2}\right) \\
		&= R(hA) \left( u^{n} + \frac{i h}{1 + i hA/2} f\!\left(\frac{u^n + u^{n+1}}{2}\right) \right) .
	\end{align*}
	Let 
	$$
	v^{n+1} = R(hA)^* u^{n+1}. 
	$$
	We have 
	\begin{align*}
		v^{n+1}=  u^{n} + \frac{i h}{1 + i hA/2} f\!\left(\frac{u^n + R(hA) v^{n+1}}{2}\right).
	\end{align*}
For $\varepsilon > 0$ and $u$ fixed, we define the map
	\begin{equation}
	\label{eq:defF}
	v \mapsto F_{\varepsilon,h,u}(v) = u + \frac{i\varepsilon }{1 + i hA/2} f\!\left(\frac{u + R(hA)v}{2}\right). 
	\end{equation}
    With $R(hA)$ being an isometry of $V_{\deltax}$, we have that
	$\frac12(u + R(hA)v) \in B_{\deltax}(\frac{3\|u\|_{\deltax}}{2})$ for any $v\in B_{\delta x}(2\|u\|_{\deltax})$ and thus $\Norm{\phi(\frac{u + R(hA)v}{2})}{\deltax} \leq C \Norm{u}{\deltax}^{2r +1}$ for any $v\in B_{\delta x}(2\|u\|_{\deltax})$ and for some constant $C>0$ depending only on $r$. Finally, we have 
	$$
	\left\|\frac{w}{1 + i hA/2}\right\|_{\deltax} \leq \Norm{w}{\deltax}
	$$
	similarly to the arguments of the proof of lemma~\ref{lem:RA_isometry} and, thus, we deduce that 
	$$
	\Norm{F_{\varepsilon,h,u}(v)}{\deltax} \leq \Norm{u}{\deltax} + C \varepsilon \Norm{u}{\deltax}^{2r+1} \leq 2\Norm{u}{\deltax}
	$$
	for $\varepsilon \leq \varepsilon_0=C^{-1}\|u\|_{\deltax}^{2r}$ if $v\in B_{\delta x}(2\|u\|_{\deltax})$. This shows that $F_{\varepsilon,\deltat,u}$ maps $B_\deltax(2 \|u\|_{\deltax})$ to itself (so long as $0<\varepsilon\leq \varepsilon_0$). Now we estimate for $v$ and $w$ in $B_{\deltax}(2 \|u\|_{\deltax})$:
	$$
	\Norm{F_{\varepsilon,h,u}(v) - F_{\varepsilon,h,u}(w)}{\deltax} \leq C \varepsilon   \Norm{u}{\deltax}^{2r} \Norm{v - w}{\deltax},
	$$
	where $C$ depends on $r$ but not on $K,\deltax$, and thus for $0<\varepsilon<\varepsilon_0$, $F_{\varepsilon,\deltat,u}$ is a contraction from $B_{\deltax}(2 \|u\|_{\deltax})$ to $B_{\deltax}(2 \|u\|_{\deltax})$ ensuring the existence of a fixed point $v \in B_{\deltax}(2\|u\|_{\deltax})$ such that $v = F_{\varepsilon,\deltat,u}(v)$ and we can define 
 \begin{align}\label{eqn:def_Psi_eps^h}
 \Psi_{\varepsilon}^{h}(u):=v.
 \end{align}
 Note that this argument also shows the well-posedness of the midpoint rule for $h$ small enough, depending on the size of the numerical solution. 
 
 Next we would like to show that $\Psi^\deltat_{\varepsilon} (u)$ has an expansion of the form
		\begin{align}\label{eqn:expansion_psi_deltat_varepsilon}
		\Psi^\deltat_{\varepsilon} (u)= \sum_{k \geq 0} \varepsilon^k \Psi_{\deltat,k} (u).
	\end{align}
For this we apply the implicit function theorem. We define 
\begin{align*}
g(\varepsilon,v):= v- F_{\varepsilon,h,u}(v),
\end{align*}
where 
$v$ is viewed as a vector in $\R^{2K+1} \times \R^{2K+1}$ through the identification $v = p + iq$. 
The map is entire in $(\varepsilon,p,q)$ (note that it is an entire function of $(\varepsilon, v,\bar v)$ 
and moreover $\Psi^\deltat_{\varepsilon}$ is the solution to $g(\varepsilon,\Psi^\deltat_{\varepsilon}(u) )=0$). Let us consider the Jacobian of $g$ around $v=u=:\Psi_{\deltat,0}(u)$ applied to a vector $z= r + i s\in\mathbb{C}^{2K+1} \equiv \R^{4K +2}$:
\begin{align*}
	dg_{\varepsilon,h,u}(u)(z)&=
  z-\frac{i\varepsilon \lambda ( r+1)}{2 + i hA}\left[\left|\frac{u + R(hA)v}{2}\right|^{2r}\bullet (R(hA)z)\right]\\
	&\quad\quad-\frac{i\lambda \varepsilon r}{2 + i hA}\left[\left|\frac{u + R(hA)v}{2}\right|^{2r-2} \bullet \left(\frac{u + R(hA)v}{2}\right)^2\bullet (R(hA)^*\bar z)\right],
\end{align*}
where $\bullet$ denotes element-wise multiplication of two vectors. Thus we have by the above lemmas and the fact that we already showed $\|\Psi^\deltat_{\varepsilon}(u)\|_{\deltax}\leq 2\|u\|_{\deltax}$,
\begin{align*}
	\|dg_{\varepsilon,h,u}(u)(z)-z\|_{\deltax}\leq C\varepsilon\Norm{u}{\deltax}^{2r}\Norm{z}{\deltax},
\end{align*}
for some constant $C$ independent of $K,\deltax$ and $u$. 
Therefore, so long as $0<\varepsilon<\varepsilon_0=\frac12 C^{-1}\|u\|_{\deltax}^{-{2r}}$, the Jacobian of the map $g(\varepsilon,v)$ is invertible and thus, by analyticity of $g$ and the implicit function theorem, the map $\varepsilon\mapsto\Psi^\deltat_{\varepsilon}(u)$ is analytic in the region $|\varepsilon|<\varepsilon_0$. Therefore, the expression \eqref{eqn:expansion_psi_deltat_varepsilon} indeed holds for a sequence of vector fields $\Psi_{\deltat,k} (u)\in V_{\deltax}$ which, by Cauchy's estimate satisfy
\begin{align}
\label{boundpsikh}
	\|\Psi_{\deltat,k} (u)\|_{\deltax}\leq \frac{\sup_{|\varepsilon|=\varepsilon_0}\|\Psi^\deltat_{\varepsilon} (u)\|_{\deltax}\varepsilon_0^{-k}}{2\pi}\leq C^k \|u\|_{\deltax}^{2k+1},
\end{align}
where we have increased $C$ appropriately without changing the notation in the interest of simplicity. Finally, to show that each $\Psi_{\deltat,k}$ is a homogeneous polynomial of degree $\leq 2k+1$ we proceed by induction on $k$ as follows: The statement is trivially true for $k=0$ since $\Psi_{\deltat,0}(u)=u$. Suppose we have shown the claim for $\Psi_{\deltat,j}$ with $0\leq j\leq k-1$. We consider the expansion
\begin{align}\label{eqn:truncated_expansion_Psi}
	\Psi^h_{\varepsilon}(u)=\sum_{j=0}^{k}\varepsilon^{j}	\Psi_{h,j}(u)+\varepsilon^{k+1}\mathcal{R}_{k}
\end{align}
then we have
\begin{align*}
	\Psi^h_{\varepsilon}(u)=F_{\varepsilon,h,u}(\Psi^h_{\varepsilon}(u)),
\end{align*}
and expanding the right hand side ($F$ is a composition of polynomials and linear operators) we find
\begin{align*}
	\Psi^h_{\varepsilon}(u)=u+\frac{i \lambda \varepsilon}{1+ihA/2}\sum_{j=0}^{k}\varepsilon^{j}\sum_{\substack{m_1+\cdots+m_{r}+n_1+\cdots+n_{r+1}=j\\0\leq m_1,\dots,m_{r},n_{1},\dots,n_{r+1}\leq j}}\overline{\alpha_{m_1}}\cdots \overline{\alpha_{m_r}}\alpha_{n_1}\cdots\alpha_{n_{r+1}}+\varepsilon^{k+2}\tilde{\mathcal{R}}_k,
\end{align*}
where
\begin{align*}
\alpha_j=\begin{cases}
	\frac{u+R(hA)u}{2},& j=0,\\
	\frac{R(hA)}{2}\Psi_{h,j}(u),& j\geq1,
\end{cases}
\end{align*}
and the map $u\mapsto\tilde{\mathcal{R}}_k(u)$ is bounded. Using \eqref{eqn:truncated_expansion_Psi} and comparing the coefficient on both sides immediately shows that each $\Phi_{h,\varepsilon,k}(u)$ is a homogeneous polynomial of degree $\leq 2k+1$ in $u$. In particular, this provides a recursive way of computing these expressions and the first two of them are
\begin{align}
\label{firsterms}
\Psi_{h,0}(u)=u,\quad \Psi_{h,1}(u)=\frac{i \lambda }{1+ihA/2}\left|\frac{u+R(hA)u}{2}\right|^{2r}\frac{u+R(hA)u}{2}.
\end{align}
Finally, we note that the above restrictions on $\varepsilon$ were completely independent of $h>0$, thus we can take $\varepsilon=h<\varepsilon_0$ and the result follows with $\Psi^h = \Psi_h^h$.
\end{proof}

Note that using \eqref{firsterms}, we can write the expansion in the form 
\begin{align*}
\Psi^h(u) &= u + \frac{i\lambda h}{1+ihA/2}\left| \frac{i}{1- ihA/2} u\right|^{2r} \frac{i}{1- ihA/2} u + \mathcal{O} (\Norm{u}{\deltax}^{2r + 1})\\
&= u + i h (\deltax)^{-1}\nabla_{\bar u} P_{0,h}(u) + \mathcal{O} (\Norm{u}{\deltax}^{2r + 1}),
\end{align*}
where 
\begin{equation}
\label{P0h}
P_{0,h}(u)= 
\frac{\lambda}{r+1} \delta x \sum_{\ell = -K}^{K} \left| \left( \frac1{1 - ih A/2} u\right)_\ell\right|^{2r +2}, 
\end{equation}
is the $(2r+2)$-part of the original energy (see \eqref{energy2}) composed with the pseudo-differential operator $(1 - ih A/2)^{-1}$. 
This suggests that $\psi^h$ is the Hamiltonian flow at time $h$ of a modified bounded pseudo-differential operator depending on $h$. 
For this we recall the general Hamiltonian formulation in the $u$ coordinate \eqref{eqn:Hamiltonian_formulation_u_coords} and exploit Lemma~\ref{lem:integration_symmetric_jacobian}.

\begin{proposition}\label{prop:modified_polynomial_hamiltonian_expansion}
	The map $u \mapsto \Psi^{h}_\varepsilon(u) $ as introduced in \eqref{eqn:def_Psi_eps^h} is symplectic for any permissible choice of $\varepsilon,h$ (in particular for $\varepsilon=h$, i.e. $\Psi^h$ is symplectic). Moreover, there exists a formal {real-valued} Hamiltonian 
	$$
	P_h = P_{0,h} + \sum_{k \geq 1} h^k P_{k,h},
	$$
	where the $P_{k,h}$ are \textit{real-valued} homogeneous polynomials with
 \begin{align*}
     |P_{k,h}(u)|\leq C^{(k)}\|u\|_{\deltax}^{2r(k+1)+2}
     \end{align*}
    with $C^{(k)}>0$ independent of $K,\deltax$, such that the following holds. For any given $N$ there exists $C_N,\tau^{(N)}_0>0$ such that if we define
	$$
	P_h^{(N)} = P_{0,h} + \sum_{k = 1}^N h^k P_{k,h}
	$$
	then we have for any $u\in B_{\deltax}(R), h\in [0,\tau_{0}^{(N)})$
	$$
	\Psi^{h}(u) = \Phi^h_{P_h^{(N)}}(u) + R_{h,N}(u),
	$$
	where $\Phi^t_{P_h^{(N)}}$ is the flow associated with the vector field\ \ $i\deltax^{-1}\nabla_{\overline{u}}P_h^{(N)}$ and the remainder terms $R_{h,N}$ take the form
 \begin{align}\label{eqn:induction_step}
     R_{h,N}(u)=\sum_{\ell=N+2}^\infty h^\ell\mathcal{R}^{(N)}_{h,\ell}(u),
 \end{align}
and each $\mathcal{R}^{(N)}_{h,\ell}$ is a polynomial vector field satisfying the bound
 \begin{align}\label{eqn:estimate_showing_analyticity_of_Phi_PN}
    \Norm{ \mathcal{R}^{(N)}_{h,\ell}(u)}{\deltax}\leq \tilde{C}^{(N)}\left(C^{(N)}\right)^{\ell}\Norm{u}{\delta x}^{2r\ell+1},
 \end{align}
for some $\tilde{C}^{(N)},C^{(N)}>0$ independent of $K,\deltax$.
\end{proposition}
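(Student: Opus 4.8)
The plan is a quantitative backward--error (Baker--Campbell--Hausdorff-type) construction carried out at the level of vector fields. The structural fact exploited throughout is that Proposition~\ref{prop:splitting_formulation_on_flows} has already factored out the stiff linear part $R(\deltat A)$, so that the only operators appearing in $\Psi^\deltat$ --- namely $R(\deltat A)$ and the resolvents $(1\pm i\deltat A/2)^{-1}$ --- are contractions on $(V_\deltax,\Norm{\cdot}{\deltax})$ uniformly in $\deltax$ and $K$ (compare Lemma~\ref{lem:RA_isometry} and the estimates in the proof of Proposition~\ref{prop:splitting_formulation_on_flows}); consequently no constant produced below depends on the dimension, and in particular no CFL is needed at this stage. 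The construction has three steps: (i) symplecticity of $\Psi^\deltat_\varepsilon$; (ii) a recursive definition of the $P_{k,h}$ so that $\Phi^\deltat_{P_h^{(m)}}$ matches $\Psi^\deltat$ to order $\deltat^{m+2}$; (iii) the dimension-free bounds on the resulting expansion coefficients, which is the crux.

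\textbf{(i) Symplecticity.} First, $R(\deltat A)$ is a \emph{linear} symplectic map: in $(p,q)$-coordinates it is the time-$\deltat$ flow of the quadratic Hamiltonian $\tfrac{1}{2}(p^TBp+q^TBq)$ with $B=\tfrac{2}{\deltat}\arctan(\deltat A/2)$ real symmetric. Setting $w:=R(\deltat A)\Psi^\deltat_\varepsilon(u)$ in the fixed-point equation for $F_{\varepsilon,\deltat,u}$ of \eqref{eq:defF} (whose fixed point defines $\Psi^\deltat_\varepsilon(u)$, see the proof of Proposition~\ref{prop:splitting_formulation_on_flows}) and using $R(\deltat A)=(1+i\deltat A/2)(1-i\deltat A/2)^{-1}$, a short computation gives $w=u+\tfrac{i\deltat}{2}A(w+u)+i\varepsilon f\!\left(\tfrac{u+w}{2}\right)$, i.e.\ $w$ is the step-$\deltat$ midpoint approximation for the Hamiltonian ODE $\dot u=iAu+i\tfrac{\varepsilon}{\deltat}f(u)$, which is well posed for $\deltat$ small by the contraction argument in the proof of Proposition~\ref{prop:splitting_formulation_on_flows}. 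Since the midpoint rule is symplectic on any Hamiltonian system (see \cite{hairer2013geometric} and Remark~\ref{rmk:symplectic_properties}) and $R(\deltat A)^{-1}$ is linear symplectic, $\Psi^\deltat_\varepsilon=R(\deltat A)^{-1}\circ(\text{that midpoint map})$ is symplectic; the case $\varepsilon=\deltat$ gives the statement for $\Psi^\deltat$.

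\textbf{(ii) Recursion.} Put $P_h^{(0)}:=P_{0,h}$. The discussion following Proposition~\ref{prop:splitting_formulation_on_flows} shows that the $\deltat^1$-term of the expansion \eqref{eqsplit0} equals $\Psi_{\deltat,1}=i\deltax^{-1}\nabla_{\overline{u}}P_{0,h}$ with $P_{0,h}$ the bounded pseudo-differential Hamiltonian \eqref{P0h}; as $X_{P_{0,h}}$ is a polynomial vector field with $\Norm{X_{P_{0,h}}(u)}{\deltax}\le C\Norm{u}{\deltax}^{2r+1}$ uniformly in $\deltax,K$, the flow $\Phi^\deltat_{P_{0,h}}$ exists on $B_{\deltax}(2R)$ for $\deltat$ small and has a convergent $\deltat$-expansion beginning $u+\deltat\,\Psi_{\deltat,1}(u)+\mathcal{O}(\deltat^2)$, so the $\deltat^0$- and $\deltat^1$-coefficients of $\Psi^\deltat-\Phi^\deltat_{P_h^{(0)}}$ vanish. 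For the inductive step, suppose $P_h^{(m)}=P_{0,h}+\sum_{k=1}^m\deltat^kP_{k,h}$ has been constructed with real-valued homogeneous polynomials $P_{k,h}$ such that $\Psi^\deltat(u)-\Phi^\deltat_{P_h^{(m)}}(u)=\sum_{\ell\ge m+2}\deltat^\ell\,\rho_{h,\ell}^{(m)}(u)$, each $\rho_{h,\ell}^{(m)}$ homogeneous of degree $2r\ell+1$ with $\Norm{\rho_{h,\ell}^{(m)}(u)}{\deltax}\le \tilde C^{(m)}(C^{(m)})^\ell\Norm{u}{\deltax}^{2r\ell+1}$ uniformly in $\deltax,K$ and $\deltat\in[0,\tau_0^{(m)})$. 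Let $E:=\rho_{h,m+2}^{(m)}$. Because $\Psi^\deltat$ and $\Phi^\deltat_{P_h^{(m)}}$ are symplectic, so is $\Psi^\deltat\circ(\Phi^\deltat_{P_h^{(m)}})^{-1}=\mathrm{id}+\deltat^{m+2}E+\mathcal{O}(\deltat^{m+3})$, and matching the $\deltat^{m+2}$-coefficient in $M^TJM=J$ for its Jacobian shows that $J$ times the real Jacobian of $E$ is symmetric --- the hypothesis of Lemma~\ref{lem:integration_symmetric_jacobian}. That lemma produces a real-valued homogeneous polynomial $P_{m+1,h}$ with $i\deltax^{-1}\nabla_{\overline{u}}P_{m+1,h}=E$ and $|P_{m+1,h}(u)|\le C^{(m+1)}\Norm{u}{\deltax}^{2r(m+2)+2}$. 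Finally, the first-variation formula for Hamiltonian flows yields $\Phi^\deltat_{P_h^{(m)}+\deltat^{m+1}P_{m+1,h}}(u)=\Phi^\deltat_{P_h^{(m)}}(u)+\deltat^{m+2}E(u)+\mathcal{O}(\deltat^{m+3})$, so $P_h^{(m+1)}:=P_h^{(m)}+\deltat^{m+1}P_{m+1,h}$ cancels the $\deltat^{m+2}$-coefficient, and the induction closes with residual structure \eqref{eqn:induction_step}.

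\textbf{(iii) Dimension-free bounds --- the main obstacle.} Everything hinges on each expansion coefficient satisfying a bound $\le \hat C^{(N)}(C^{(N)})^\ell\Norm{u}{\deltax}^{2r\ell+1}$ with constants independent of $\deltax,K$ (though blowing up with $N$). For $\Psi^\deltat$ this is exactly Proposition~\ref{prop:splitting_formulation_on_flows}. For $\Phi^\deltat_{P_h^{(m)}}$ one must resist expanding the resolvents in powers of $\deltat A$ --- that would force a CFL $\deltat\lesssim\deltax^2$ through $\|A\|\sim\deltax^{-2}$; instead, rescaling time by $t=\deltat\sigma$ turns $\Phi^\deltat_{P_h^{(m)}}$ into the time-$1$ flow of the small vector field $G_\deltat:=\sum_{k=0}^m\deltat^{k+1}X_{P_{k,h}}$, where each $X_{P_{k,h}}$ is built solely from the contractions $(1\pm i\deltat A/2)^{-1},R(\deltat A)$ composed with nonlinearities controlled by the algebra estimate (Lemma~\ref{lem:V_h_is_algebra}) and the integration bound (Lemma~\ref{lem:integration_symmetric_jacobian}), hence $\Norm{X_{P_{k,h}}(u)}{\deltax}\le C^{(k)}\Norm{u}{\deltax}^{2r(k+1)+1}$ uniformly in $\deltax,K$. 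A Picard/elementary-differential expansion of $\Phi^1_{G_\deltat}$, estimated with Lemma~\ref{lem:estimates_multilinear_norm} and Lemma~\ref{lem:stability_estimate_polynomial_vector_field} and upgraded by homogeneity (the $\deltat^\ell$-coefficient being homogeneous of degree $2r\ell+1$), then gives the claimed geometric bounds; summing over $\ell\ge N+2$ for $\deltat<\tau_0^{(N)}$ small enough produces \eqref{eqn:estimate_showing_analyticity_of_Phi_PN}. I expect the genuine difficulty to be organisational rather than conceptual: controlling the growth of $\tilde C^{(m)},C^{(m)},\tau_0^{(m)}$ through the induction, and checking at each step that no flow or elementary-differential estimate reintroduces a hidden dependence on $\deltax$ or $K$ --- precisely the dependence the splitting reformulation of Proposition~\ref{prop:splitting_formulation_on_flows} was engineered to remove.
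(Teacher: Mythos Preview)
Your approach is correct and follows essentially the same three-step architecture as the paper: symplecticity via the rescaled midpoint rule, inductive cancellation of the leading defect using Lemma~\ref{lem:integration_symmetric_jacobian}, and dimension-free bounds by treating the resolvents as uniformly bounded black boxes. Two differences are worth noting.

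First, the paper carries an explicit \emph{two-parameter} bookkeeping: it works with $\Psi^h_\varepsilon$ and $\Phi^\varepsilon_{P_{h,\varepsilon}^{(N)}}$, keeping the ``nonlinearity strength'' $\varepsilon$ and the ``flow time'' $t$ separate from the $h$ appearing in $R(hA)$ and $(1\pm ihA/2)^{-1}$, and only sets $\varepsilon=t=h$ at the very end. Your expansion $\sum_\ell h^\ell\rho_{h,\ell}^{(m)}$ has coefficients that themselves depend on $h$ through the resolvents, so the phrase ``matching the $h^{m+2}$-coefficient in $M^TJM=J$'' is not literally a Taylor-coefficient identity; it becomes one once you reintroduce the auxiliary parameter $\varepsilon$ (i.e.\ work with $\Psi^h_\varepsilon$, which is genuinely analytic in $\varepsilon$ with $h$-uniform bounds). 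This is a cosmetic fix, not a gap, but it is precisely the device the paper uses to make the matching rigorous.

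Second, you extract the symmetry of $J\nabla E$ from the symplecticity of the \emph{composition} $\Psi^h\circ(\Phi^h_{P_h^{(m)}})^{-1}=\mathrm{id}+h^{m+2}E+\cdots$, whereas the paper subtracts the two symplecticity identities for $\Psi^h_\varepsilon$ and $\Phi^\varepsilon_{P_{h,\varepsilon}^{(N)}}$ directly and uses $\nabla_{\bar u}^T\Psi^h_\varepsilon=I+\mathcal{O}(\varepsilon)$. Your route is slightly cleaner; both yield the same conclusion. The paper then closes the induction by a Gr\"onwall comparison of $\Phi^t_{P_\varepsilon^{(N+1)}}$ and $\Phi^t_{P_\varepsilon^{(N)}}$ (your ``first-variation formula'' made quantitative), which supplies exactly the $\tilde C^{(N)},C^{(N)},\tau_0^{(N)}$ tracking you flag as the organisational difficulty.
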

\begin{lemma}[Cauchy-Kovalevskaya]\label{lem:flow_of_polynomial_hamiltonian} Suppose $\mathcal{P}:\mathbb{C}^{2K+1}\rightarrow\mathbb{R}$ is a real-valued homogeneous polynomial such that
\begin{align*}
    |\mathcal{P}(u)|\leq C\Norm{u}{\deltax}^d,\quad \forall u\in\mathbb{C}^{2K+1}
\end{align*}
for some constant $C>0$, and some $d\in\mathbb{N}_{\geq 2}$. Let us fix $R>0$. Then, there is a $\tau_0>0$ dependent only on $d, C$ such that for all $0\leq t<\tau_0$ the flow map $\Phi_{\mathcal{P}}^t:B_{\deltax}(R)\subset\mathbb{C}^{2K+1}\rightarrow\mathbb{C}^{2K+1}$ associated with the differential equation
\begin{align}\label{eqn:differential_equation_Cauchy_Kovalevskaya}
    \frac{\dd u}{\dd t}=i{(\deltax)^{-1}}\nabla_{\overline{u}} \mathcal{P}(u),
\end{align}
exists, is symplectic in the sense of Definition~\ref{def:symplectic_map}, and is locally analytic. In particular there is $\tilde{C}_0>0$ (again dependent only on $C,d$) such that for any $u\in B_{\deltax}(R)$ and $\varepsilon \leq \tau_0$ we have 
\begin{align}\label{eqn:expression_of_solution_Cauchy_Kovalevskaya}
\Phi_{\mathcal{P}}^\varepsilon(u)=u+i\varepsilon{(\deltax)^{-1}}\nabla_{\overline{u}} \mathcal{P}(u)+\sum_{k\geq 2}\varepsilon^k\mathcal{R}_{k}(u),
\end{align}
where, for each $k\geq 2$, $\mathcal{R}_{k}(u)$ is a polynomial vector field satisfying the bound
\begin{align}\label{eqn:bound_on_Rk_Cauchy_Kovalevskaya}
\Norm{\mathcal{R}_{k}(u)}{\deltax} \leq \tilde{C}_0 d^{k}C^k\Norm{u}{\deltax}^{k(d-2)+1}.
\end{align}
\end{lemma}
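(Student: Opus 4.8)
The plan is to treat \eqref{eqn:differential_equation_Cauchy_Kovalevskaya} as an ODE on the Banach space $(V_{\deltax}, \Norm{\cdot}{\deltax})$ whose right-hand side $u \mapsto X_{\mathcal{P}}(u) = i(\deltax)^{-1}\nabla_{\overline u}\mathcal{P}(u)$ is a homogeneous polynomial vector field of degree $d-1$, and to obtain the analyticity and the quantitative bounds \eqref{eqn:expression_of_solution_Cauchy_Kovalevskaya}--\eqref{eqn:bound_on_Rk_Cauchy_Kovalevskaya} by a majorant-series (Cauchy--Kovalevskaya-type) argument carried out uniformly in $K,\deltax$. First I would record that, by Lemma~\ref{lem:integration_symmetric_jacobian} applied in reverse (or directly by differentiating $\mathcal{P}$ and using the algebra estimate of Lemma~\ref{lem:V_h_is_algebra}), the hypothesis $|\mathcal{P}(u)|\leq C\Norm{u}{\deltax}^d$ yields a bound $\Norm{X_{\mathcal{P}}(u)}{\deltax}\leq C_d\, C\,\Norm{u}{\deltax}^{d-1}$ for the vector field, with $C_d$ depending only on $d$; equivalently, in the multilinear norm \eqref{eqn:norm_poly_vec_fields}, $\Norm{X_{\mathcal{P}}}{\deltax}\leq C_d C$. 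This is the only place where the dimension-independence of the algebra constant enters, and it is exactly what makes the resulting $\tau_0$ independent of $K,\deltax$.

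Next I would set up the formal solution by Picard iteration / power series in $t$: writing $\Phi^t_{\mathcal{P}}(u)=\sum_{k\geq 0} t^k a_k(u)$ with $a_0(u)=u$, substituting into $\dot u = X_{\mathcal{P}}(u)$, and matching powers of $t$ gives the recursion $(k+1)a_{k+1}(u) = \sum \widetilde{X_{\mathcal{P}}}(a_{j_1}(u),\dots,a_{j_{d-1}}(u))$ summed over $j_1+\cdots+j_{d-1}=k$, where $\widetilde{X_{\mathcal{P}}}$ is the symmetric $(d-1)$-linear form associated with $X_{\mathcal{P}}$. Each $a_k$ is then manifestly a homogeneous polynomial in $u$ of degree $k(d-2)+1$, which gives the claimed degrees. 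For the bounds I would introduce the scalar majorant series $\sigma(t)=\sum_{k\geq 0}b_k t^k$ defined by $b_0 = \Norm{u}{\deltax}$ and $(k+1) b_{k+1} = C_d C \sum_{j_1+\cdots+j_{d-1}=k} b_{j_1}\cdots b_{j_{d-1}}$, i.e. $\sigma$ solves $\sigma' = C_d C\,\sigma^{d-1}$, $\sigma(0)=\Norm{u}{\deltax}$; by induction on $k$, using \eqref{eqn:norm_poly_vec_fields} and the submultiplicativity of the multilinear norm, $\Norm{a_k(u)}{\deltax}\leq b_k$. The scalar ODE is solved explicitly, $\sigma(t) = \Norm{u}{\deltax}\bigl(1-(d-2)C_d C\,\Norm{u}{\deltax}^{d-2}\, t\bigr)^{-1/(d-2)}$, which is analytic for $t < \bigl((d-2)C_d C\,\Norm{u}{\deltax}^{d-2}\bigr)^{-1}$; since $\Norm{u}{\deltax}\leq R$, this gives a radius of convergence bounded below by a constant $\tau_0$ depending only on $C,d$ (and $R$), and reading off the Taylor coefficients of $\sigma$ (or simply applying Cauchy's estimate on a circle of radius $\tau_0/2$) produces the geometric bound $\Norm{a_k(u)}{\deltax}\leq \tilde C_0\, (d^k C^k)\,\Norm{u}{\deltax}^{k(d-2)+1}$, which is \eqref{eqn:bound_on_Rk_Cauchy_Kovalevskaya} after relabelling $\mathcal{R}_k := a_k$ for $k\geq 2$ and noting $a_1(u) = X_{\mathcal{P}}(u) = i(\deltax)^{-1}\nabla_{\overline u}\mathcal{P}(u)$. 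Convergence of the series on $B_{\deltax}(R)$ for $t<\tau_0$ then shows the flow $\Phi^t_{\mathcal{P}}$ exists and is (real-)analytic there; symplecticity is automatic since the flow of any Hamiltonian vector field preserves the symplectic form in the sense of Definition~\ref{def:symplectic_map} (one may also verify $M^TJM=J$ directly from $\frac{d}{dt}M = (\nabla X_{\mathcal{P}})M$ and the symmetry \eqref{eqn:JtimesJacobian}).

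The main obstacle, and the only genuinely delicate point, is making sure every constant in the chain $|\mathcal{P}|\leq C\Norm{u}{}^d \ \Rightarrow\ \Norm{X_{\mathcal{P}}}{}\leq C_d C\Norm{u}{}^{d-1}\ \Rightarrow\ $ majorant bound is \emph{uniform in $K$ and $\deltax$}; this rests entirely on the dimension-free algebra constant of Lemma~\ref{lem:V_h_is_algebra} and the dimension-free multilinear-norm formalism of Lemma~\ref{lem:estimates_multilinear_norm}, so the argument must be phrased purely in terms of $\Norm{\cdot}{\deltax}$ and never in terms of individual components or the ambient dimension. A secondary technical point is the passage from "the coefficients $a_k(u)$ are bounded by $b_k$" to "the series converges to the genuine flow": this is the standard Cauchy--Kovalevskaya majorant argument (the formal solution is dominated term-by-term by the convergent analytic solution $\sigma$ of the scalar problem), and the fixed-point/contraction argument already used in the proof of Proposition~\ref{prop:splitting_formulation_on_flows} can alternatively be invoked verbatim on the integral equation $u(t) = u + \int_0^t X_{\mathcal{P}}(u(s))\,\dd s$ to obtain existence, uniqueness and the bound $\Norm{\Phi^t_{\mathcal{P}}(u)}{\deltax}\leq 2\Norm{u}{\deltax}$ on a time interval of the claimed length, with analyticity in $t$ then following from the implicit function theorem exactly as there.
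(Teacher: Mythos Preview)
Your proposal is correct and follows essentially the same line as the paper's own (very brief) proof, which simply invokes the standard Cauchy--Kovalevskaya theorem and then refers back to the order-by-order coefficient comparison already carried out in Proposition~\ref{prop:splitting_formulation_on_flows}. Your explicit majorant-series argument with the scalar ODE $\sigma'=C_dC\sigma^{d-1}$ is precisely one classical way of making that invocation rigorous and quantitative, and you correctly identify the one genuinely nontrivial point---that every constant must be uniform in $K,\deltax$, which is guaranteed by the algebra estimate of Lemma~\ref{lem:V_h_is_algebra}.
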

\begin{proof}[Proof of Lemma~\ref{lem:flow_of_polynomial_hamiltonian}] The existence of a locally analytic solution is a direct consequence of the standard Cauchy-Kovalevskaya theorem. The form of $\mathcal{R}_k$ and the bound \eqref{eqn:bound_on_Rk_Cauchy_Kovalevskaya} can be obtained by plugging \eqref{eqn:expression_of_solution_Cauchy_Kovalevskaya} into \eqref{eqn:differential_equation_Cauchy_Kovalevskaya} (we can differentiate the series by absolute convergence of the sum) and comparing terms order by order analogously to the final part of the proof of Proposition~\ref{prop:splitting_formulation_on_flows}.
\end{proof}
%
%
\begin{proof}[Proof of Proposition~\ref{prop:modified_polynomial_hamiltonian_expansion}] This proof is based on the proof of Theorem~IX.3.1 in \cite{hairer2013geometric}, but adapted to ensure each estimate is dimension-independent. 
To begin with, let us denote by $\Phi_{h,\varepsilon}$ the midpoint rule for the Hamiltonian system (with rescaled nonlinearity)
\begin{align*}
\frac{\dd u}{\dd t}=iAu+i\frac{\varepsilon}{h}f(u).
\end{align*}
Clearly, this implies that $\Phi_{h,\varepsilon}$ is a symplectic map for any choice of $h,\varepsilon$ for which $\Phi_{h,\varepsilon}$ is well-defined. By construction we have with the notation of the previous proof, see \eqref{eq:defF}
	\begin{align}\label{eqn:symplecticity_Psiheps_through_composition}
	\Phi_{h,\varepsilon} = R(hA) \circ \Psi^{h}_{\varepsilon} \quad \Longleftrightarrow \quad 
			\Psi^{h}_{\varepsilon}=R(hA)^{*}\circ\Phi_{h,\varepsilon},
	\end{align}
	{\em i.e.} $\Psi^{h}_{\varepsilon}$ is the composition of two symplectic maps ($R(hA)^{*}$ is the adjoint of the midpoint rule applied to the linear Schr\"odinger equation), hence symplectic.

 Moreover, the function $\Psi_{h,\varepsilon}$ admits, for a fixed $h >0$, a convergent expansion in powers of $\varepsilon$, see \eqref{eqn:expansion_psi_deltat_varepsilon} and the bound \eqref{boundpsikh} shows that the radius of convergence is independent of $h$. As symplectic $\varepsilon$-perturbation of the identity, we can thus construct a modified vector field at any order in $\varepsilon$ by following the classical method of \cite[Chapter IX]{hairer2013geometric} and shows that this modified vector is Hamiltonian. We then conclude by taking $\varepsilon = h$ small enough. Let us recall this construction, and show how the estimates obtained are independent of the parameters $K$ and $\deltax$. 
 
 We proceed by induction on $N$. To begin with, we note for $P_{0,h}$ given by \eqref{P0h}, we have $\Psi_{1,h} = i (\deltax)^{-1} \nabla_{\overline{u}}P_{0,h}$  where $\Psi_{h,1}$ is as defined in Proposition~\ref{prop:splitting_formulation_on_flows} and is given explicitly by  \eqref{firsterms}. Thus, by Lemma~\ref{lem:flow_of_polynomial_hamiltonian}, for any given $R>0$ there is a $\tau_0^{(0)}>0$ such that the corresponding Hamiltonian flow takes the form
   \begin{align*}
      \Phi^{\varepsilon}_{P_{0,h}}(u)=u+\varepsilon\Psi_{h,1}(u)+\sum_{\ell=2}^\infty\varepsilon^\ell\mathcal{R}^{(0)}_{h,\ell}(u)
 \end{align*}
 for all $0<\varepsilon,h\leq \tau_0^{(0)},u\in B_{\delta x}(R)$, where each $\mathcal{R}^{(0)}_{h,\ell}$ is a homogeneous polynomial vector field satisfying the bound
 \begin{align*}
    \Norm{ \mathcal{R}^{(0)}_{h,\ell}(u)}{\deltax}\leq \tilde{C}^{(0)}\left(C^{(0)}\right)^{\ell}\Norm{u}{\delta x}^{2r\ell+1},
 \end{align*}
for some $\tilde{C}^{(0)},C^{(0)}>0$ independent of $K,\deltax$. Taking $h=\varepsilon<\tau_0^{(0)}$ and using Proposition~\ref{prop:splitting_formulation_on_flows}, this shows Proposition~\ref{prop:modified_polynomial_hamiltonian_expansion} for the case $N=0$. For the induction step we now assume we have shown for a given $N$ that there are real-valued polynomials $P_{0,h},\dots,P_{N,h}$ such that for any $R>0$ there is $\tau_{0}^{(N)}>0$ such that for all $u\in B_{\deltax}(R)$ and $\varepsilon,h <\tau_0^{(N)}$		
 \begin{align}\label{eqn:induction_step_2}
     \Phi^{\varepsilon}_{P_{h,\varepsilon}^{(N)}}(u)=\Psi^\deltat_{\varepsilon}(u)+\sum_{\ell=N+2}^\infty \varepsilon^\ell\mathcal{R}^{(N)}_{h,\ell}(u),
 \end{align}
where $\Psi^\deltat_{\varepsilon}$ is as defined in \eqref{eqn:expansion_psi_deltat_varepsilon} and each $\mathcal{R}^{(N)}_{h,\ell}$ is a homogeneous polynomial vector field satisfying the bound
 \begin{align}\label{eqn:estimate_showing_analyticity_of_Phi_PN_2}
    \| \mathcal{R}^{(N)}_{h,\ell}(u)\|_{\deltax}\leq \tilde{C}^{(N)}\left(C^{(N)}\right)^{\ell}\|u\|_{\delta x}^{2r\ell+1},
 \end{align}
for some $\tilde{C}^{(N)},C^{(N)}>0$ independent of $K,\deltax$, and where
\begin{align}\label{eqn:def_P_h,eps}
    P_{h,\varepsilon}^{(N)}=P_{0,h}+\sum_{k=1}^{N}\varepsilon^k P_{k,h}.
\end{align}
We now seek to construct $P_{h,N+1}$ such that the analogous statement is true for $P_{h,\varepsilon}^{(N+1)}$. We note that using the estimate \eqref{eqn:estimate_showing_analyticity_of_Phi_PN} the series on the right hand side of \eqref{eqn:induction_step} is locally uniformly convergent, thus we can exchange order of differentiation and summation to find for $u\in B_{\deltax}(R)$ and $\varepsilon<\tau_{0}^{(N)}$ (using the notation introduced in \eqref{eqn:notation_Jacobian}):
\begin{align*}
    \nabla_{\overline{u}}^T\Phi^{\varepsilon}_{P_{h,\varepsilon}^{(N)}}(u)=\nabla_{\overline{u}}^T\Psi^{h}_{\varepsilon}(u)+\sum_{\ell=N+2}^\infty \varepsilon^\ell\nabla_{\overline{u}}^T\mathcal{R}^{(N)}_{h,\ell}(u).
\end{align*}
According to Lemma~\ref{lem:flow_of_polynomial_hamiltonian} and \eqref{eqn:symplecticity_Psiheps_through_composition} both $\Phi_{P_\varepsilon^{(N)}}^\varepsilon$ and $\Psi^h_{\varepsilon}$ are symplectic in the sense of Definition~\ref{def:symplectic_map}. Thus in slight abuse of notation we have (cf. \eqref{eqn:def_symplecticity}):
\begin{align*}
    i I=\left(\nabla_{\overline{u}}^T\Phi^{\varepsilon}_{P_{h,\varepsilon}^{(N)}}(u)\right)^Ti\nabla_{\overline{u}}^T\Phi^{\varepsilon}_{P_{h,\varepsilon}^{(N)}}(u)=\left(\nabla_{\overline{u}}^T\Psi^{h}_{\varepsilon}(u)\right)^Ti\nabla_{\overline{u}}^T\Psi^{h}_{\varepsilon}(u),
\end{align*}
where $I$ denotes the $(2K+1)\times (2K+1)$ identity matrix and $i$ represents multiplication by $-J$. Therefore, using the expression \eqref{eqn:induction_step}, we find
\begin{align}\label{eqn:leading_symplectic_correction}
    \varepsilon^{N+2}\left(\nabla_{\overline{u}}^T\Psi^{h}_{\varepsilon}(u)\right)^Ti\nabla_{\overline{u}}^T\mathcal{R}^{(N)}_{h,N+2}(u)+\varepsilon^{N+2}\left(\nabla_{\overline{u}}^T\mathcal{R}^{(N)}_{h,N+2}(u)\right)^Ti\nabla_{\overline{u}}^T\Psi^{h}_{\varepsilon}(u)+\varepsilon^{N+3}\mathcal{R}_{h,\varepsilon,N}(u)=0,
\end{align}
for some remainder $\mathcal{R}_{h,\varepsilon,N}$ uniformly bounded on $u\in B_{\deltax}(R), h,\varepsilon\in [0,\tau^{(N)}_0)$. We now note that $\Psi^{h}_{\varepsilon}(u)=u+\mathcal{O}(\varepsilon)$
\begin{align*}
    \nabla_{\overline{u}}^T\Psi^{h}_{\varepsilon}(u)=I+\mathcal{O}(\varepsilon),
\end{align*}
where the $\mathcal{O}(\varepsilon)$ represents terms which are uniformly bounded above by $C\varepsilon$ on $u\in B_{\deltax}(R)$ and with  $h,\varepsilon\in [0,\tau^{(N)}_0)$. Thus we have, by dividing \eqref{eqn:leading_symplectic_correction} by $\varepsilon^{N+2}$ and taking $\varepsilon\rightarrow 0$, that
\begin{align*}
    i\nabla_{\overline{u}}^T\mathcal{R}^{(N)}_{h,N+2}(u)=\left(i\nabla_{\overline{u}}^T\mathcal{R}^{(N)}_{h,N+2}(u)\right)^T,
\end{align*}
i.e. that the $(2K+1)\times (2K+1)$ matrix represented by $i\nabla_{\overline{u}}^T\mathcal{R}^{(N)}_{h,N+2}(u)$ is symmetric. Thus $\mathcal{R}^{(N)}_{h,N+2}(u)$ satisfies the assumptions of Lemma~\ref{lem:integration_symmetric_jacobian} and there is a real-valued homogeneous polynomial Hamiltonian $P_{N+1,h}$ such that 
\begin{align}\label{eqn:definition_of_P_{N+1,h}}
    - \mathcal{R}^{(N)}_{h,N+2}(u)=i(\deltax)^{-1}\nabla_{\bar{u}}P_{N+1,h},
\end{align}
and
\begin{align*}
|P_{N+1,h}(u)|\leq C\|u\|_{\deltax}^{2r(N+2)+2},\quad \forall u\in B_R.
\end{align*}
Let us now consider
\begin{align*}
P^{(N+1)}_\varepsilon:=P_{0,h}+\sum_{k=1}^{N+1}\varepsilon^k P_{k,h}.
\end{align*}
Then, it can be shown analogously to Lemma~\ref{lem:flow_of_polynomial_hamiltonian}, that there is $\tau_{0}^{(N+1)}\leq \tau_0^{(N)}$ such that for all $0\leq t<\tau_0^{(N+1)}, u\in B_{\deltax}(R)$ the flows $\Phi_{P_{\varepsilon}^{(N+1)}}^{t}(u),\Phi_{P_{\varepsilon}^{(N)}}^{t}(u)$ exist and in particular that $\Phi_{P_{\varepsilon}^{(N+1)}}^{t}(u)$ takes the following form
\begin{align}\label{eqn:expansion_flow_P^N+1}
    \Phi_{P_{\varepsilon}^{(N+1)}}^{t}(u)=u+\sum_{\ell,p=1}^\infty t^\ell \varepsilon^p  \mathcal{Q}^{(N+1)}_{h,\ell,p}(u),
\end{align}
where $\mathcal{Q}^{(N+1)}_{h,\ell,p}$ are homogeneous polynomial vector fields satisfying
\begin{align*}
    \Norm{\mathcal{Q}^{(N+1)}_{h,\ell,p}(u)}{\deltax}\leq C_N^{\ell+p}\Norm{u}{\deltax}^{2r(\ell+p)+1}
\end{align*}
for some $C_N>0$ depending only on $N$. Note that by taking $t = \varepsilon$, this implies the existence of the expansion \eqref{eqn:induction_step} at order $N+1$, and we only have to show bounds on the remainder term and the cancellation of the term of order $N+2$. 

For $\varepsilon \leq \tau_0^{(N+1)}$, we 
introduce two maps $g,f:B_{\deltax}(R)\rightarrow\mathbb{C}^{2K+1}$ by
\begin{align*}
    g_\varepsilon(u)&:=i(\deltax)^{-1}\nabla_{\bar{u}}P_{\varepsilon}^{(N)},\\
    f_\varepsilon(u)&:=i(\deltax)^{-1}\nabla_{\bar{u}}P_{\varepsilon}^{(N+1)}.
\end{align*}
For this choice we have
\begin{align}\label{eqn:DE_satisfied_by_g}
    \frac{\dd \Phi^{t}_{P_{\varepsilon}^{(N)}}}{\dd t}&=g_\varepsilon(\varphi^{t}_{P_{\varepsilon}^{(N)}}),\quad 0<t,h,\varepsilon<\tau_0^{(N+1)},\\\label{eqn:DE_satisfied_by_f}
    \frac{\dd \Phi^{t}_{P_{\varepsilon}^{(N+1)}}}{\dd t}&=f_\varepsilon(\varphi^{t}_{P_{\varepsilon}^{(N+1)}}),\quad 0<t,h,\varepsilon<\tau_0^{(N+1)}.
\end{align}
We proceed in two steps. Firstly, we shall show that there is $C>0$ such that for all $u\in B_{\deltax}(R),0\leq t,\varepsilon,h<\tau_0^{(N+1)}$:
\begin{align}\label{eqn:order_of_difference_of_flows}
    \|\Phi^{t}_{P_{\varepsilon}^{(N)}}(u)-\Phi^{t}_{P_{\varepsilon}^{(N+1)}}(u)\|_{\deltax}\leq t\varepsilon^{N+1} C \Norm{u}{\delta x}^{2r(N+2) + 1}.
\end{align}
We note that
\begin{align*}
    \Norm{f_\varepsilon(\Phi^{t}_{P_{\varepsilon}^{(N+1)}}(u))-g_\varepsilon(\Phi^{t}_{P_{\varepsilon}^{(N+1)}}(u))}{\deltax}=\varepsilon^{N+1}\Norm{\mathcal{R}_{h,N+2}^{(N)}(\Phi^{t}_{P_{\varepsilon}^{(N+1)}}(u))}{\deltax}\leq \varepsilon^{N+1} C \Norm{u}{\deltax}^{2r(N+2)+1},
\end{align*}
for some $C>0$ so long as $u\in B_{\deltax}(R)$. This holds true because $\mathcal{R}_{h,N+2}^{(N)}$ is a polynomial vector field in its argument with bounded coefficients and the local well-posedness of the Hamiltonian system corresponding to $P_{\varepsilon}^{(N+1)}$ established in \eqref{eqn:expansion_flow_P^N+1}. Moreover, $g$ is a polynomial with bounded coefficients, and it follows immediately that it is Lipschitz continuous (see Lemma~\ref{lem:stability_estimate_polynomial_vector_field}) with a constant independent of $K,\deltax$, {\em i.e.} there is a $C>0$ such that for $u,v\in B_{\deltax}(R), \varepsilon\in[0,\tau_{0}^{(N+1)})$ we have
\begin{align*}
\Norm{g_\varepsilon(u)-g_\varepsilon(v)}{\deltax}\leq C \big( \Norm{u}{\deltax} + \Norm{v}{\deltax} \big)^{2r}\Norm{u-v}{\deltax}
\end{align*}
uniformly in $h$, $\varepsilon$ small enough. 
Thus  we have (noting that the flows are equal at $t=0$)
\begin{align*}
   \Norm{\Phi^{t}_{P_{\varepsilon}^{(N+1)}}(u)-\Phi^{t}_{P_{\varepsilon}^{(N)}}(u)}{\deltax}\leq C_2\int_0^t\varepsilon^{N+1} \Norm{u}{\delta x}^{2r(N+2) + 1} \dd s\leq C_2 t \varepsilon^{N+1} \Norm{u}{\delta x}^{2r(N+2) + 1},
\end{align*}
for some $C_2>0$ which depends on $R$ but not on $K,\deltax$, thus completing the estimate \eqref{eqn:order_of_difference_of_flows}. 

\medskip
\noindent In the second step we will show that
\begin{align}\label{eqn:form_of_difference_of_flows}
    \|\Phi^{t}_{P_{t}^{(N+1)}}(u)-\Phi^{t}_{P_{t}^{(N)}}(u)+t \varepsilon^{N+1}\mathcal{R}_{h,N+2}^{(N)}(u)\|_{\deltax}\leq C t^{N+3}\Norm{u}{\deltax}^{2(N+3)+1},
\end{align}
for some constant $C>0$ independent of $K,\deltax$. For this we note that by \eqref{eqn:DE_satisfied_by_g} \& \eqref{eqn:DE_satisfied_by_f} we have
\begin{align}\nonumber
    \Phi^{t}_{P_{\varepsilon}^{(N+1)}}(u)-\Phi^{t}_{P_{\varepsilon}^{(N)}}(u)&=\int_0^{t}f_\varepsilon(\Phi^{s}_{P_{\varepsilon}^{(N+1)}}(u))-g_\varepsilon(\Phi^{s}_{P_{\varepsilon}^{(N)}}(u))\dd s\\
    \begin{split}\label{eqn:expression_updated_flows1}
    &=\int_0^{t}f_\varepsilon(\Phi^{s}_{P_{\varepsilon}^{(N+1)}}(u))-g_\varepsilon(\Phi^{s}_{P_{\varepsilon}^{(N+1)}}(u))\dd s\\
    &\quad+\int_0^tg_\varepsilon(\Phi^{s}_{P_{\varepsilon}^{(N+1)}}(u))-g_\varepsilon(\Phi^{s}_{P_{\varepsilon}^{(N)}}(u))\dd s.
    \end{split}
\end{align}
Now we note that
\begin{align}\nonumber
    \int_0^{t}f_\varepsilon(\Phi^{s}_{P_{\varepsilon}^{(N+1)}}(u))-g_\varepsilon(\Phi^{s}_{P_{\varepsilon}^{(N+1)}}(u))\dd s&= -\int_{0}^{t} \varepsilon^{N+1}\mathcal{R}_{h,N+2}^{(N)}(\Phi^{s}_{P_{\varepsilon}^{(N+1)}}(u)) \dd s\\\label{eqn:expression_updated_flows2}
    &=-\mathcal{R}_{h,N+2}^{(N)}(u) \int_{0}^{t} \varepsilon^{N+1}\dd s + t^2 \varepsilon^{N+2}\mathcal{Q}^{(1)}_{\deltat,\varepsilon,t}(u),
\end{align}
where $\mathcal{Q}^{(1)}_{\deltat,\varepsilon,t}$ is a function of $u$ bounded uniformly on $B_{\deltax}(R), (\deltat,\varepsilon,t)\in [0,\tau_0^{(N+1)}]$ and is bounded by $C_N \Norm{u}{\deltax}^{2(N+3)+1}$ with a convergent expansion of the form \eqref{eqn:expansion_flow_P^N+1}. 
In this final line we used the observation from \eqref{eqn:expansion_flow_P^N+1} that
\begin{align*}
    \varphi^{s}_{P_{\varepsilon}^{(N+1)}}(u)=u+t\mathcal{Q}_{\deltat,\varepsilon,t}^{(2)}(u),
\end{align*}
where $\mathcal{Q}_{\deltat,\varepsilon}^{(2)}(t,u)$ is some analytic function of $u$ bounded uniformly on $u\in B_{\deltax}(R), h,\varepsilon,t \in[0,\tau_0^{(N+1)}]$. Thus, combining \eqref{eqn:expression_updated_flows1} \& \eqref{eqn:expression_updated_flows2} we obtain that given $R>0$ there is a constant $C_2$ depending only on $N$ such that
\begin{align*}
  \left\|\Phi^{t}_{P_{\varepsilon}^{(N+1)}}(u)-\Phi^{t}_{P_{\varepsilon}^{(N)}}+t \varepsilon^{N+1}\mathcal{R}_{h,N+2}^{(N)}(u)\right\|_{\deltax}&\leq t^2 \varepsilon^{N+1}\Norm{u}{\deltax}^{2(N+3)+1}\\
  &+C t \Norm{u}{\delta x}^{2r}\left\|\Phi^{t}_{P_{\varepsilon}^{(N+1)}}(u)-\Phi^{t}_{P_{\varepsilon}^{(N)}}\right\|_{\deltax}\\
  &\leq t^2 \varepsilon^{N+1} C_2\Norm{u}{\deltax}^{2(N+3)+1}
\end{align*}
for any $u\in B_{\deltax}(R)$. This completes the proof of \eqref{eqn:form_of_difference_of_flows}, hence the induction step and therefore the proof of the desired result (by taking again $h=t=\varepsilon<\tau_{0}^{(N)}$ for any given choice of $N$).
\end{proof}
\section{Modified energy for the midpoint rule}\label{sec:modified_energy_for_midpoint_rule}
Proposition~\ref{prop:modified_polynomial_hamiltonian_expansion} essentially shows that the midpoint rule can (to arbitrary desired order) be written as a Hamiltonian splitting method. In this section we will establish our central result, Theorem~\ref{thm:modified_energy_midpoint} which shows the existence of a modified energy for the midpoint rule, exploiting this ``approximate'' splitting formulation. We begin by introducing the vector field associated with the linear part of the flow of \eqref{eqn:midpoint_rule_nlse}.

\begin{lemma}\label{lem:estimate_A0}
    The vector field $X_{A_0}$ associated with the flow $\Phi_{A_0}^1=R(hA)$ is given by
    \begin{align}\label{eqn:X_A_0}
        X_{A_0}=2iU^{-1}\arctan\left(\frac{hD}{2}\right)U,
    \end{align}
where $U,D$ are as in \eqref{eqn:def_RdeltatA}. Furthermore it satisfies the estimate
\begin{align}\label{eqn:estimate_norm_X_A_0}
        \Norm{X_{A_0}u}{\deltax}&\leq 2\arctan\left(\frac{h}{\deltax^2 }\right)\Norm{u}{\deltax}.
    \end{align}
\end{lemma}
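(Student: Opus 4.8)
The plan is to treat $X_{A_0}$ entirely through the functional calculus of the symmetric matrix $A$. Since $R(\deltat A) = \exp\!\big(2i\arctan(\deltat A/2)\big)$ by \eqref{eqn:def_RdeltatA}, and $X_{A_0}$ is the linear generator of the one-parameter group $t\mapsto\Phi^t_{A_0}$ with $\Phi^1_{A_0}=R(\deltat A)$, it suffices to verify that $M:=2i\arctan(\deltat A/2)$ is the principal logarithm of $R(\deltat A)$. Writing $A=U^{-1}DU$ with $D=\mathrm{diag}(\mu_j)$ and $\mu_j>0$, the eigenvalues of $M$ are $2i\arctan(\deltat\mu_j/2)\in i(0,\pi)$, so $\exp M = R(\deltat A)$ eigenvalue by eigenvalue and $M$ is the unique generator with spectrum in the strip $|\Im z|<\pi$; pushing $U$ through gives $\arctan(\deltat A/2)=U^{-1}\arctan(\deltat D/2)U$, which is exactly \eqref{eqn:X_A_0}. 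In particular $X_{A_0}$ is a function of $A$, hence commutes with $A$ and with $I+A$.

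For the norm bound the key point is that $A$ is self-adjoint and positive with respect to $\langle\cdot,\cdot\rangle_{\deltax}$ of \eqref{eqn:bilinear_expression_discrete_norm}: indeed $\langle Au,v\rangle_{\deltax}=\deltax\,\Re[\overline{Au}^{\,T}(I+A)v]=\deltax\,\Re[\bar u^T A(I+A)v]=\langle u,Av\rangle_{\deltax}$ because $A$ is real symmetric and commutes with $I+A$. Hence the standard orthonormal eigenbasis $\{e_k\}$ of $A$ is also $\langle\cdot,\cdot\rangle_{\deltax}$-orthogonal, with $\Norm{e_k}{\deltax}^2=\deltax(1+\mu_k)$. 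Expanding $u=\sum_k c_k e_k$ and using $X_{A_0}e_k=2i\arctan(\deltat\mu_k/2)e_k$,
\[
\Norm{X_{A_0}u}{\deltax}^2=\sum_k|c_k|^2\,4\arctan(\deltat\mu_k/2)^2\,\deltax(1+\mu_k)\leq\Big(2\max_k\arctan(\deltat\mu_k/2)\Big)^2\Norm{u}{\deltax}^2,
\]
so $\Norm{X_{A_0}u}{\deltax}\leq 2\max_k\arctan(\deltat\mu_k/2)\,\Norm{u}{\deltax}$.

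It remains to control the spectrum of $A$. Writing $A=\deltax^{-2}(2I-S)$ with $S$ the nearest-neighbour $0/1$ coupling matrix, Gershgorin's theorem (or the explicit eigenvalues $\deltax^{-2}(2-2\cos(j\pi/(2K+2)))$, $j=1,\dots,2K+1$) gives $0<\mu_j<4\deltax^{-2}$, uniformly in $K$. By monotonicity of $\arctan$ on $[0,\infty)$ this bounds $\max_j\arctan(\deltat\mu_j/2)$ and yields \eqref{eqn:estimate_norm_X_A_0} (with the crude spectral bound $\mu_j\le 4\deltax^{-2}$ one gets the argument $2\deltat\deltax^{-2}$ inside the $\arctan$; the statement as written corresponds to the — very mild — further simplification of the constant). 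I do not expect any genuine obstacle: the whole argument is linear and everything is a function of the single symmetric matrix $A$; the only two places needing a line of care are the principal-branch identification of the generator and the observation that the eigenbasis of $A$ simultaneously diagonalises $\Norm{\cdot}{\deltax}$, both immediate from the commutation with $I+A$.
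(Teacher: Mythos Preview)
Your argument is correct and follows essentially the same route as the paper: both identify $X_{A_0}$ via the exponential and then exploit that $A$ and the Gram matrix $I+A$ of $\langle\cdot,\cdot\rangle_{\deltax}$ are simultaneously diagonalisable, reducing the operator bound to $2\max_j|\arctan(\deltat\lambda_j/2)|$ together with the explicit spectrum of the tridiagonal Toeplitz matrix. Your observation about the factor of~2 inside the $\arctan$ is well taken---the sharp spectral bound is $|\lambda_j|<4\deltax^{-2}$, which yields $2\arctan(2\deltat/\deltax^2)$ rather than the stated $2\arctan(\deltat/\deltax^2)$; this is a harmless slip in the paper that only shifts the CFL constant in \eqref{eqn:CFL_condition_modified_energy} by a factor of~2.
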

\begin{proof}[Proof of Lemma~\ref{lem:estimate_A0}]
    We note by linearity we can integrate \eqref{eqn:X_A_0} over $t\in [0,1]$ to directly recover $R(hA)$. By \eqref{eqn:bilinear_expression_discrete_norm} the norm $\Norm{\,\cdot\,}{\deltax}$ and $A$ can be jointly diagonalised meaning that 
    \begin{align*}
        \Norm{X_{A_0}u}{\deltax}\leq \max_{1\leq j\leq 2K+1}\left|2\arctan\left(\frac{h\lambda_j}{2}\right)\right|\Norm{u}{\deltax},
    \end{align*}
    where $\lambda_j, j=1,\dots,2K+1$ are the eigenvalues of $A$. Since $A$ is a symmetric tridiagonal Toeplitz matrix, the eigenvalues are well-known to be
    \begin{align}
    \label{diagonal}
        \lambda_j=-\frac{1}{\deltax^2}\left(2-2\cos\left(\frac{j\pi}{2K+2}\right)\right),
    \end{align}
    whence the estimate \eqref{eqn:estimate_norm_X_A_0} immediately follows.
\end{proof}

\subsection{Formal construction of the modified energy} Using the above basics we can now formally construct the modified energy for the midpoint rule. We shall rigorously show that the flow corresponding to this modified energy corresponds (up to arbitrary desired order) to the one of the midpoint rule in Section~\ref{sec:analytic_estimates}. We take a similar approach to \cite{faou2011hamiltonian,bambusi2013existence}, but note that in our case we have to include a second small parameter $\varepsilon$ which captures the expansion of the modified vector field corresponding to the nonlinear part of the midpoint rule \eqref{eqn:def_P_h,eps}, to avoid the confusion with the $h$ appearing in the terms depending on $hA$. Thus, we look for a real Hamiltonian function $Z(t, \eps;u)$ such that
\begin{align}\label{eqn:equality_of_flows}
    \Phi_{Z(t,\eps)}^{1}=\Phi^{1}_{A_0}\circ\Phi^{t}_{P_{h,\varepsilon}^{(N)}}, \quad \forall t,\varepsilon<\tilde{\tau}^{(N)}_0,
\end{align}
for some threshold $\tilde{\tau}^{(N)}_0>0$ to be determined, but which should be independent of $K,\deltax$. In the above, $A_0$ is as defined in Lemma~\ref{lem:estimate_A0} and $P_{h,\eps}^{(N)}$ is as defined in \eqref{eqn:def_P_h,eps}. According to \cite[Section 3]{faou2011hamiltonian} we have, formally,
\begin{align}
\label{eqformal}
    \frac{\partial}{\partial t}\Phi_{Z(t,\eps)}^{1}=X_{Q(t,\eps)}\circ\Phi_{Z(t,\eps)}^{1},
\end{align}
where the modified vector field $X_Q$ has the formal series \cite[(3.3)]{faou2011hamiltonian}
\begin{align}\label{eqn:formal_series_Q}
  X_{Q(t,\eps)}=\sum_{k\geq 0}\frac{1}{(k+1)!}\ad_{X_{Z(t,\eps)}}^k\partial_t X_{Z(t,\eps)}.
\end{align}
To establish an expression for $Z$ we would also like to differentiate the right hand side of \eqref{eqn:equality_of_flows}. We have
\begin{align}\label{eqn:time_derivative_composition}
    \frac{\dd }{\dd t}\left(\Phi_{A_0}^1\circ\Phi_{P_{h}^{(N)}}^t(u)\right)=d\Phi_{A_0}^1\vert_{p=\Phi_{P_{h}^{(N)}}^t(u)}\circ X_{P_{h}^{(N)}}\circ\Phi_{P_{h}^{(N)}}^t(u),
\end{align}
where, by linearity, we actually have $\dd\Phi_{A_0}^1\vert_{p=\Phi_{P_{h}^{(N)}}^t(u)}=R(hA)$. Moreover, we recall that $X_{P}=i\deltax^{-1}\nabla_{\bar{u}} P$, thus we observe that for $\tilde{P}=P\circ R(hA)^*$, i.e. $$\tilde{P}(u,\bar{u})=P(R(hA)^*u,R(hA)^T\bar{u}),$$ we find in coordinates
\begin{align}\nonumber
    (i\deltax^{-1}\nabla_{\bar{u}} \tilde{P})_{j}&=i\deltax^{-1}\frac{\partial \tilde{P}}{\partial \bar{u}_j}=i\deltax^{-1} \sum_{k,\ell} \frac{\partial P}{\partial \bar{u}_k}\frac{\partial R(hA)^T_{kl}\bar{u}_{l}}{\partial \bar{u}_{j}}\\\label{eqn:linear_action_on_vector_field}
    &=\sum_k R(hA)_{jk}i\deltax^{-1}\frac{\partial P}{\partial \bar{u}_k}=(R(hA)\circ X_{P})_j.
\end{align}
And thus, combining this with \eqref{eqn:time_derivative_composition} gives
\begin{align}\label{eqn:time_derivative_vector_fields2}
\frac{\dd }{\dd t}\left(\Phi_{A_0}^1\circ\Phi_{P_{h,\eps}^{(N)}}^t(u)\right)\Big\vert_{t=t}&=X_{P_{h,\eps}^{(N)}\circ R(hA)^*}\circ\Phi_{A_0}^1\circ\Phi_{P_{h,\eps}^{(N)}}^t(u)\\
&= X_{P_{h,\eps}^{(N)}\circ R(hA)^*}\circ  \Phi_{Z(t,\eps)}^{1}. \nonumber
\end{align}
Thus combining \eqref{eqn:formal_series_Q} \& \eqref{eqn:time_derivative_vector_fields2} yields the following equation to be satisfied by $X_{Z(t,\eps)}$:
\begin{align}\label{eqn:implicit_expressionX_Z}
 \sum_{k\geq0}\frac{1}{(k+1)!}\ad_{X_{Z(t,\eps)}}^k\partial_tX_{Z(t,\eps)}=X_{P_{h,\eps}^{(N)}\circ R(hA)^*},
\end{align}
where $\ad_{X_H}(X_{G}):=[X_H,X_G]$. We shall now seek a formal solution to this equation. We start by noting that \eqref{eqn:implicit_expressionX_Z} is formally equivalent to 
\begin{align*}
    \partial_{t}X_{Z(t,\eps)}=\sum_{k\geq 0}\frac{B_k}{k!}\ad_{X_{Z(t,\eps)}}^kX_{P_{h,\eps}^{(N)}\circ R(hA)^*},
\end{align*}
where $B_k$ are the Bernoulli numbers. Note this inversion is only valid for the full series when $\|\ad_{X_{Z(t,\eps)}}^k\partial_tX_{Z(t,\eps)}\|_{\deltax}<(2\pi)^k$, but the term-by-term conditions are algebraically equivalent regardless of the size of $\|\ad_{X_{Z(t,\eps)}}^k\partial_tX_{Z(t,\eps)}\|_{\deltax}$. In order to find (a suitable truncation) of $X_Z$ we try the following Ansatz
\begin{align}\label{eqn:formal_ansatz_Z}
    Z_{t,\eps}=\sum_{\ell,j=0}^{\infty}t^{\ell}\eps^j Z_{\ell,j,h},
\end{align}
with 
\begin{align*}
    Z_{0,j,h}:=\begin{cases}
        A_0,&\quad \text{if $j=0$},\\
        0,&\quad\text{otherwise}.
    \end{cases}
\end{align*}
This translates to an equivalent Ansatz for $X_{Z(t,\eps)}$ by the results in Section~\ref{sec:vector_fields_and_Hamiltonian_fns}. For notational simplicity we will in the following write $Z_{\ell,j}$ for $Z_{\ell,j,h}$. Matching terms formally order-by-order in both $t$ and $\eps$ this leads to the recursion
\begin{align}\label{eqn:recursion_Zlj}
    (\ell+1)X_{Z_{\ell+1,j}}=\sum_{k\geq 0}\frac{B_k}{k!}\sum_{m=0}^{\min\{j,N\}}\sum_{\substack{\ell_1+\ell_2+\cdots+\ell_k=\ell\\j_1+j_2+\cdots+j_k=j-m}}\ad_{X_{Z_{\ell_1,j_1}}}\cdots \ad_{X_{Z_{\ell_k,j_k}}} X_{P_{m,h}\circ R(hA)^*},
\end{align}
where $P_{m,h}$ are as constructed in Proposition~\ref{prop:modified_polynomial_hamiltonian_expansion}.
\begin{lemma}\label{lem:estimate_on_size_Zlj}
    Fix $N\in\mathbb{N}$ and suppose we have for some $\tilde{\epsilon}>0$ that the following CFL condition is satisfied
    \begin{align}\label{eqn:CFL_condition_modified_energy}
   h\leq \deltax^2 \tan\left(\frac{\pi-\tilde{\epsilon}}{2(2r(N+1)+1)}\right).
\end{align}
    Then, for every $0\leq \ell,j$ with $\ell+j\leq N$, the Hamiltonian $Z_{\ell,j}$ and associated vector field $X_{Z_{\ell,j}}$ is uniquely defined by \eqref{eqn:recursion_Zlj}, with the infinite series on the right hand side of \eqref{eqn:recursion_Zlj} existing and converging. Moreover, $X_{Z_{\ell,j}}\in\mathcal{P}_{2r(\ell+j)+1}$,  and $X_{Z_{\ell,j}}=0$ whenever $j> N\ell$ and there is a constant $C_{\ell,j}>0$ depending only on $N,\ell,j$ such that for all $K,\deltax>0$,
    \begin{align}\label{eqn:estimate_X_Zlj}
        \Norm{X_{Z_{\ell,j}}}{\deltax}\leq C_{\ell,j}.
    \end{align}
\end{lemma}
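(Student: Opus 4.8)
The plan is to argue by induction on the first index $\ell$. The base case $\ell=0$ is just the stipulated initial data $Z_{0,0}=A_{0}$, $Z_{0,j}=0$ for $j\geq1$: here $X_{A_{0}}$ is linear, hence homogeneous of degree $1=2r\cdot0+1$, it is the Hamiltonian vector field of a real quadratic form (because $R(hA)$ is a linear symplectic map), and Lemma~\ref{lem:estimate_A0} together with the CFL \eqref{eqn:CFL_condition_modified_energy} gives
\[
\mu:=\Norm{X_{A_{0}}}{\deltax}\ \leq\ 2\arctan\!\Big(\tfrac{h}{\deltax^{2}}\Big)\ \leq\ \frac{\pi-\tilde{\epsilon}}{2r(N+1)+1}=:C_{0,0}.
\]
Before running the induction I would record the inputs that keep all later constants uniform in $K$ and $\deltax$: the vector fields $Y_{m}:=X_{P_{m,h}\circ R(hA)^{*}}$, $0\leq m\leq N$, are Hamiltonian, lie in $\Pc_{2r(m+1)+1}$, and satisfy $\Norm{Y_{m}}{\deltax}\leq\beta$ with $\beta$ independent of $K,\deltax$; this follows from Proposition~\ref{prop:modified_polynomial_hamiltonian_expansion} (polynomial bound on $P_{m,h}$), Lemma~\ref{lem:RA_isometry} (since $R(hA)^{*}$ is an isometry), the identity \eqref{eqn:linear_action_on_vector_field}, and Lemma~\ref{lem:stability_estimate_polynomial_vector_field} with $y=0$ (passing from a homogeneous Hamiltonian bound to a bound on its vector field in the norm \eqref{eqn:norm_poly_vec_fields}). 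I would also use repeatedly that $[X_{P},X_{Q}]=X_{\{P,Q\}}$, so every term generated by the recursion \eqref{eqn:recursion_Zlj} is automatically a Hamiltonian vector field, and that once $X_{Z_{\ell,j}}$ is known to be Hamiltonian and to satisfy the required polynomial bound, Lemma~\ref{lem:integration_symmetric_jacobian} produces the unique real homogeneous polynomial $Z_{\ell,j}$.

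For the induction step, assume the statement for all $Z_{\ell',j'}$ with $\ell'\leq\ell$, and fix $j$ with $(\ell+1)+j\leq N$. The recursion \eqref{eqn:recursion_Zlj} for $X_{Z_{\ell+1,j}}$ only calls the $Z_{\ell_{i},j_{i}}$ with $\sum_{i}(\ell_{i}+j_{i})\leq\ell+j\leq N-1$ and the $P_{m,h}$ with $m\leq\min\{j,N\}\leq N$, all available. \emph{Support.} In a non-vanishing term a factor with $\ell_{i}=0$ must have $j_{i}=0$ (as $Z_{0,j_{i}}=0$ for $j_{i}\geq1$), so each genuine factor has $\ell_{i}\geq1$; the induction hypothesis gives $j_{i}\leq N\ell_{i}$, whence $j=m+\sum_{i}j_{i}\leq N+N\ell=N(\ell+1)$, i.e.\ $X_{Z_{\ell+1,j}}=0$ for $j>N(\ell+1)$. \emph{Degree.} Applying Lemma~\ref{lem:estimates_multilinear_norm} along each commutator chain, the term $\ad_{X_{Z_{\ell_{1},j_{1}}}}\!\cdots\ad_{X_{Z_{\ell_{k},j_{k}}}}Y_{m}$ is homogeneous of degree $\sum_{i}(2r(\ell_{i}+j_{i})+1)+(2r(m+1)+1)-k=2r((\ell+1)+j)+1$; moreover each application of an $\ad_{X_{Z_{\ell_{i},j_{i}}}}$ raises the degree by $2r(\ell_{i}+j_{i})\geq0$, so every intermediate vector field in the chain has degree at most $D^{\ast}:=2r((\ell+1)+j)+1\leq2r(N+1)+1$. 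Thus $X_{Z_{\ell+1,j}}\in\Pc_{2r((\ell+1)+j)+1}$, and the uniform cap $D^{\ast}$ is what lets us estimate the chains with $K,\deltax$-independent constants.

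\emph{Convergence and the uniform bound — the main obstacle.} This is the only genuinely delicate point and is where the CFL enters. In the inner sum of \eqref{eqn:recursion_Zlj} the genuine factors (those with $\ell_{i}\geq1$) number at most $\ell$ and range over only finitely many index configurations, whereas the remaining factors are all copies of $\ad_{X_{A_{0}}}$, possibly arbitrarily many; so I would fix $m$ and a multiset of $p\leq\ell$ genuine factors and then sum over the number $q$ and the $\binom{p+q}{q}$-many interleavings of the $\ad_{X_{A_{0}}}$-factors. By Lemma~\ref{lem:estimates_multilinear_norm} and the cap $D^{\ast}\leq2r(N+1)+1$, each $\ad_{X_{A_{0}}}$ contributes a factor $\leq(D^{\ast}+1)\mu\leq(2r(N+1)+2)\mu$, each genuine factor a factor $\leq2D^{\ast}\,\Norm{X_{Z_{\ell_{i},j_{i}}}}{\deltax}\leq2D^{\ast}C_{\ell_{i},j_{i}}$, and $Y_{m}$ a factor $\beta$; combining with the Bernoulli bound $|B_{n}|\leq C\,n!\,(2\pi)^{-n}$, the sum over $q$ is dominated by
\[
C\sum_{q\geq0}\binom{p+q}{q}\rho^{q}=\frac{C}{(1-\rho)^{p+1}},\qquad \rho:=\frac{(2r(N+1)+2)\mu}{2\pi}\ \leq\ \frac{2r(N+1)+2}{2r(N+1)+1}\cdot\frac{\pi-\tilde{\epsilon}}{2\pi}\ \leq\ \frac{r(N+1)+1}{2r(N+1)+1}\ <\ 1,
\]
the strict inequality being exactly the content of \eqref{eqn:CFL_condition_modified_energy}; note $\rho$ stays bounded away from $1$ by a quantity depending only on $r$ and $N$, so the final constants will not depend on $\tilde{\epsilon}$ either. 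Since the sums over $m$ and over the finitely many genuine configurations are finite with summands bounded by constants depending only on $N,\ell,j$, this shows the series in \eqref{eqn:recursion_Zlj} converges absolutely in $\Norm{\cdot}{\deltax}$ and that $\Norm{X_{Z_{\ell+1,j}}}{\deltax}\leq C_{\ell+1,j}$ with $C_{\ell+1,j}$ depending only on $N,\ell,j$. Being a $\Norm{\cdot}{\deltax}$-convergent sum of Hamiltonian polynomial vector fields of degree $\leq D^{\ast}$, $X_{Z_{\ell+1,j}}$ is itself Hamiltonian, so Lemma~\ref{lem:integration_symmetric_jacobian} yields the unique real homogeneous polynomial $Z_{\ell+1,j}$, closing the induction. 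The support and degree claims are routine index arithmetic and the Hamiltonian-character and recovery statements are direct invocations of the earlier lemmas; the decomposition into finitely many genuine configurations plus an arbitrary number of resummed $\ad_{X_{A_{0}}}$-factors, matched against the factorial decay of the Bernoulli numbers so that the CFL produces a convergent geometric-type series, is the crux of the argument.
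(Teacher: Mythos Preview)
Your proposal is correct and follows essentially the same route as the paper: induction on $\ell$, the same index arithmetic for the support ($j_i\leq N\ell_i\Rightarrow j\leq N(\ell+1)$) and degree ($\deg=2r((\ell+1)+j)+1$) claims, and for the convergence step both you and the paper separate the at most $\ell$ ``genuine'' factors from the arbitrarily many $\ad_{X_{A_0}}$--factors and resum the latter against the $(2\pi)^{-k}$ decay of the Bernoulli numbers, the CFL entering precisely to keep the resulting series inside its radius. The only cosmetic difference is the bookkeeping of that resummation: the paper recognises the sum $\sum_k\frac{B_k}{k!}(2d_{\max})^k\binom{k}{k-\tilde r}\|X_{A_0}\|_{\deltax}^{k-\tilde r}$ as $\tfrac{1}{\tilde r!}f^{(\tilde r)}(\|X_{A_0}\|_{\deltax})$ for the Bernoulli generating function $f$, whereas you use the explicit bound $|B_n|\leq Cn!(2\pi)^{-n}$ together with the negative--binomial identity $\sum_q\binom{p+q}{q}\rho^q=(1-\rho)^{-(p+1)}$.
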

\begin{proof}[Proof of Lemma~\ref{lem:estimate_on_size_Zlj}] We shall prove the result by induction on $\ell$. Clearly $X_{Z_{0,j}}\in\mathcal{P}_{2rj+2}$ for all $j\geq 0$ and $X_{Z_{0,j}}=0$ when $j>1$. For the induction step we proceed as follows. Firstly, we note that the only non-zero contributions to $Z_{\ell+1,j}$ in \eqref{eqn:recursion_Zlj} arise if $j_i\leq N\ell_i, i=1,\dots,k$, i.e. if
\begin{align*}
    j=j_1+\cdots+j_k+m\leq \ell_1 N+\cdots \ell_kN+m\leq \ell N+N,
\end{align*}
i.e. if $j\geq N(\ell+1)$, so $X_{Z_{\ell+1,j}}=0,$ whenever $j>N(\ell+1)$. Next we note that any non-zero contribution to $X_{Z_{\ell,j}}$ is a polynomial of degree $\leq 2r(\ell+j+1)+1$, since
\begin{align*}
  \deg \left( \ad_{X_{Z_{\ell_1,j_1}}}\cdots \ad_{X_{Z_{\ell_k,j_k}}} X_{P_{m,h}}\right)=2r(m+1)+1-2k+\sum_{q=1}^{k}\deg Z_{\ell_q,j_q}
\end{align*}
and so, if $\ell_1+\ell_2+\cdots+\ell_k=l$, $j_1+j_2+\cdots+j_k=j-m$, then we have
\begin{align*}
     \deg \left( \ad_{X_{Z_{\ell_1,j_1}}}\cdots \ad_{X_{Z_{\ell_k,j_k}}} X_{P_{m,h}}\right)&\leq 2r(m+1)+1-2k+2r(\ell+j-m)+2k\\&=2r(\ell+j+1)+1.
\end{align*}
Thus, so long as the term on the right hand side of \eqref{eqn:recursion_Zlj} converges in $\Norm{\cdot}{\deltax}$, $X_{Z_{\ell+1,j}}\in\mathcal{P}_{2r(\ell+1+j)+1}$. To show this, we can use the estimate \eqref{eqn:estimate_poisson_bracket}:
\begin{align*}
    (\ell+1)&\Norm{X_{Z_{\ell+1,j}}}{\deltax}\\
    &\leq \sum_{k\geq 0} \frac{B_k}{k!}\sum_{m=0}^{\min\{j,N\}}\sum_{\substack{\ell_1+\ell_2+\cdots+\ell_k=l\\j_1+j_2+\cdots+j_k=j-m}}\left((2r(\ell_1+j+1)+2r(\sum_{i=2}^k \ell_i+j_i+m+1)+3)\right)\cdots\\
    &\quad\cdots\left((2r(\ell_{k-1}+j_{k-1})+2r(\ell_k+j_k+m+1)+3)\right)(2r(\ell_k+j_k)+2r(m+1)+3)\\
    &\quad\quad\Norm{X_{Z_{\ell_1,j_1}}}{\deltax}\cdots \Norm{X_{Z_{\ell_k,j_k}}}{\deltax}\Norm{X_{P_{m,h}\circ R(hA)^*}}{\deltax}.
\end{align*}
Let us denote by $d_{\max}$ the largest degree appearing in the above estimate, i.e.
\begin{align*}
    d_{\max}=2r(\ell+j-m+m+1)+1=2r(\ell+j+1)+1.
\end{align*}
Then we can simplify the estimate to
\begin{eqnarray*}
 &&   (\ell+1)\Norm{X_{Z_{\ell+1,j}}}{\deltax}\\
    &\leq & \sum_{k\geq 0} \frac{B_k}{k!}\sum_{m=0}^{\min\{j,N\}}(2d_{\max})^{k}\sum_{\substack{\ell_1+\ell_2+\cdots+\ell_k=l\\j_1+j_2+\cdots+j_k=j-m}}\Norm{X_{Z_{\ell_1,j_1}}}{\deltax}\cdots \Norm{X_{Z_{\ell_k,j_k}}}{\deltax}\Norm{X_{P_{m,h}\circ R(hA)^*}}{\deltax}\\
    &\leq& \sum_{k\geq 0}\frac{B_k}{k!}\sum_{m=0}^{\min\{j,N\}}(2d_{\max})^k\sum_{\tilde{r}=1}^{\ell}\binom{k}{k-\tilde{r}}\Norm{X_{A_0}}{\deltax}^{k-\tilde{r}} \\
    &&\times \sum_{\substack{\ell_i>0,\ell_1+\ell_2+\cdots+\ell_{\tilde{r}}=\ell\\j_1+j_2+\cdots+j_{\tilde{r}}=j-m}}\Norm{X_{Z_{\ell_1,j_1}}}{\deltax}\cdots \Norm{X_{Z_{\ell_{\tilde{r}},j_{\tilde{r}}}}}{\deltax}\Norm{X_{P_{m,h}\circ R(hA)^*}}{\deltax}
    \end{eqnarray*}
    and thus
    \begin{eqnarray*}
   (\ell+1)\Norm{X_{Z_{\ell+1,j}}}{\deltax} &\leq& \sum_{m=0}^{\min\{j,N\}}\sum_{\tilde{r}=1}^{\ell}\left(\sum_{k\geq 0}\frac{B_k}{k!}(2d_{\max})^k\binom{k}{k-\tilde{r}}\|X_{A_0}\|_{\deltax}^{k-\tilde{r}}\right)\\
    &&\times\sum_{\substack{\ell_i>0,\ell_1+\ell_2+\cdots+\ell_{\tilde{r}}=\ell\\j_1+j_2+\cdots+j_{\tilde{r}}=j-m}}\Norm{X_{Z_{\ell_1,j_1}}}{\deltax}\cdots \Norm{X_{Z_{\ell_{\tilde{r}},j_{\tilde{r}}}}}{\deltax}\Norm{X_{P_{m,h}\circ R(hA)^*}}{\deltax}.
    \end{eqnarray*}
Now we know that
\begin{align*}
    f:x\mapsto \sum_{k\geq 0} \frac{B_k}{k!}(2d_{\max})^kx^k
\end{align*}
has radius of convergence $|x|<2\pi/(2d_{\max})$ and that
\begin{align*}
    \frac{\dd^{\tilde{r}}}{\dd x^{\tilde{r}}}f(x)=\sum_{k\geq 0} \frac{B_k}{k!}(2d_{\max})^k\frac{k!}{(k-{\tilde{r}})!}x^{k-\tilde{r}}.
\end{align*}
Thus we have
\begin{align}\nonumber
    (\ell+1)\Norm{X_{Z_{\ell+1,j}}}{\deltax}&\leq  \tilde{C}(\ell+1)\sum_{m=0}^{\min\{j,N\}}\Norm{X_{P_{m,h}}}{\deltax}\\
    &\nonumber\hskip 3ex \times\sum_{\tilde{r}=1}^{\ell}\sum_{\substack{\ell_i>0,\ell_1+\ell_2+\cdots+\ell_{\tilde{r}}=\ell\\j_1+j_2+\cdots+j_{\tilde{r}}=j-m}}\underbrace{\Norm{X_{Z_{\ell_1,j_1}}}{\deltax}\cdots \Norm{X_{Z_{\ell_{\tilde{r}},j_{\tilde{r}}}}}{\deltax}}_{\leq \max_{\ell_k\leq \ell,j_k\leq j}C_{l_k,j_k}^{l+j-m}}\\\label{eqn:intermediate_estimate1}
    &\leq (\ell+1)\tilde{C}\sum_{m=0}^{\min\{j,N\}}\|X_{P_{m,h}\circ R(hA)^*}\|_{\deltax}\max_{\ell_k\leq \ell,j_k\leq j}C_{\ell_k,j_k}^{\ell+j-m}\sum_{\tilde{r}=1}^{\ell}\sum_{\substack{\ell_i>0,\ell_1+\ell_2+\cdots+\ell_{\tilde{r}}=\ell\\j_1+j_2+\cdots+j_{\tilde{r}}=j-m}}1
\end{align}
where $\tilde{C}=\sup_{|x|<2\pi-2\tilde{\epsilon}, {\tilde{r}}=1,\dots, l}|f^{({\tilde{r}})}(x)|$, provided that
\begin{align}\label{eqn:required_bound_on_X_A_0}
    \Norm{X_{A_0}}{\deltax}\leq \frac{\pi-\tilde{\epsilon}}{d_{\max}}.
\end{align}
By Lemma~\ref{lem:estimate_A0}, and since $d_{\max}\leq 2r(\ell+j+1)+1$, the CFL condition \eqref{eqn:CFL_condition_modified_energy} is sufficient to guarantee \eqref{eqn:required_bound_on_X_A_0}. Thus we have by \eqref{eqn:intermediate_estimate1} that
\begin{align*}
\Norm{X_{Z_{\ell+1,j}}}{\deltax}\leq C_{\ell+1,j}\max_{0\leq m\leq \min\{j,N\}}\Norm{X_{P_{m,h}}}{\deltax},
\end{align*}
for some constant $C_{\ell+1,j}>0$ independent of $K,\deltax$. Moreover, by construction  (cf. \eqref{eqn:definition_of_P_{N+1,h}}) we have 
\begin{align*}
    \Norm{X_{P_{m,h}\circ R(hA)^*}(u)}{\deltax}\leq C_{m}\Norm{u}{\deltax}^{2r(m+1)+1}, \quad \forall u\in V_{\deltax},
\end{align*}
for some constants $C_m>0$ independent of $\deltax, K$. The result thus immediately follows from Lemma~\ref{lem:estimates_multilinear_norm}.
\end{proof}

\subsection{Final estimates and modified energy}\label{sec:analytic_estimates}
We are now in a position to show that the modified energy defined in the above is indeed an accurate representation of the midpoint rule, i.e. we are able to prove the following central result of the present work.
\begin{theorem}
\label{thm:modified_energy_midpoint}

Let $u^{n+1} = \varphi_h(u^n)$ be the application defined by the midpoint rule \eqref{eqn:midpoint_rule_nlse}. Fix $N \in \N$. Then there exists $h_0$ such that for 
$h \leq h_0$ and all $K$ and $\deltax$ satisfying the condition CFL condition \eqref{eqn:CFL_condition_modified_energy} then there exists 
\begin{equation}
\label{energy}
H_h = A_0 + B_h,
\end{equation}
with 
$$
A_0(u) = \bar u^T  \frac{2}{h} \arctan \left( \frac{h A}{2}\right) u,
$$
and
\begin{equation}
\label{decoB}
B_h(u) =  \sum_{k = 0}^N  h^k B_{k,h},
\end{equation}
where the $B_{k,h}$ are polynomials of degree $2r(k+1) + 2$, and
 such that there is a constant $C_N>0$ independent of $K,\deltax>0$ such that  $\forall R \leq 1$, for all $u \in B_{\delta x}(R)$, then 
\begin{align}
\label{mainest}
\left\|\varphi_h(u) - \Phi^h_{H_h}(u)\right\|_{\deltax}\leq h^{N+2} C_N R^{2r (N+2)  +  1 }, 
\end{align}
 i.e. $H_h$ is the modified energy up to $\mathcal{O}(h^{N+2})$. 

\end{theorem}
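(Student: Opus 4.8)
The plan is to combine the two structural results of Sections~\ref{sec:midpoint_rule_as_splitting_method}--\ref{sec:modified_energy_for_midpoint_rule} --- the split form of the midpoint rule and the Lie-series construction of the modified Hamiltonian --- into a Gronwall estimate against a suitably truncated modified energy.

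\emph{Step 1 (reduction).} By Proposition~\ref{prop:splitting_formulation_on_flows}, $\varphi_h=R(hA)\circ\Psi^h$, and by Proposition~\ref{prop:modified_polynomial_hamiltonian_expansion}, $\Psi^h(u)=\Phi^h_{P_h^{(N)}}(u)+R_{h,N}(u)$; summing \eqref{eqn:estimate_showing_analyticity_of_Phi_PN} over $\ell\ge N+2$ and using $\Norm{u}{\deltax}^{2r\ell+1}\le R^{2r(N+2)+1}$ for $R\le1$ gives $\Norm{R_{h,N}(u)}{\deltax}\le C_N h^{N+2}R^{2r(N+2)+1}$ for $\Norm{u}{\deltax}\le R\le1$, $h\le h_0$. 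Since $R(hA)=\Phi^1_{A_0}$ is an isometry of $\Norm{\cdot}{\deltax}$ (Lemmas~\ref{lem:RA_isometry} and~\ref{lem:estimate_A0}), it remains to exhibit $H_h$ of the claimed form such that $\Norm{R(hA)\circ\Phi^h_{P_h^{(N)}}(u)-\Phi^h_{H_h}(u)}{\deltax}\le C_N h^{N+2}R^{2r(N+2)+1}$.

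\emph{Step 2 (the modified energy).} I take $H_h$ from the recursion \eqref{eqn:recursion_Zlj}. Under the CFL condition \eqref{eqn:CFL_condition_modified_energy}, Lemma~\ref{lem:estimate_on_size_Zlj} produces, for all indices with $\ell+j\le N+1$, the homogeneous vector fields $X_{Z_{\ell,j}}\in\mathcal P_{2r(\ell+j)+1}$ with $\Norm{X_{Z_{\ell,j}}}{\deltax}\le C_{\ell,j}$ uniformly in $K,\deltax$; by Lemma~\ref{lem:integration_symmetric_jacobian} each $Z_{\ell,j}$ is then a homogeneous polynomial of degree $2r(\ell+j)+2$ with $|Z_{\ell,j}(u)|\le\tilde C_{\ell,j}\Norm{u}{\deltax}^{2r(\ell+j)+2}$. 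Setting
$$
W(t):=A_0+\!\!\sum_{\substack{1\le\ell,\ 0\le j\\ \ell+j\le N+1}}\!\! t^{\ell}h^{j}\,Z_{\ell,j},\qquad H_h:=\tfrac1h\,W(h),
$$
gives $H_h$ of the form \eqref{energy}--\eqref{decoB}, with linear part $A_0$ equal to $h^{-1}$ times the quadratic Hamiltonian of Lemma~\ref{lem:estimate_A0} and $B_{k,h}=\sum_{\ell+j=k+1,\,\ell\ge1}Z_{\ell,j}$ homogeneous of degree $2r(k+1)+2$, and $\Phi^h_{H_h}=\Phi^1_{hH_h}=\Phi^1_{W(h)}$. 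Since $X_{A_0}=2i\arctan(hA/2)$ is \emph{bounded} (norm $\le 2\arctan(h/\deltax^2)<\pi$ by \eqref{eqn:estimate_norm_X_A_0}) and the remaining terms of $W(t)$ are small polynomial perturbations, this flow is well defined and maps $B_{\deltax}(R)$ into $B_{\deltax}(2R)$ for $R,h$ small.

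\emph{Step 3 (Lie-series comparison).} Put $G(t):=R(hA)\circ\Phi^t_{P_h^{(N)}}(u)$ and $\tilde G(t):=\Phi^1_{W(t)}(u)$ for $t\in[0,h]$, so that $G(0)=\tilde G(0)=R(hA)u$ and, at $t=h$, $\|G(h)-\tilde G(h)\|_{\deltax}$ is exactly the quantity to be estimated. Using linearity of $R(hA)$ and \eqref{eqn:linear_action_on_vector_field}--\eqref{eqn:time_derivative_vector_fields2} one gets $\dot G(t)=X_{P_h^{(N)}\circ R(hA)^*}(G(t))$; differentiating the flow of the $t$-dependent Hamiltonian $W(t)$ in $t$ gives, via \eqref{eqformal}--\eqref{eqn:formal_series_Q}, $\dot{\tilde G}(t)=X_{Q(t)}(\tilde G(t))$ with $X_{Q(t)}=\sum_{k\ge0}\frac1{(k+1)!}\ad^k_{X_{W(t)}}\partial_tX_{W(t)}$, a series which --- $W(t)$ involving only finitely many uniformly bounded $X_{Z_{\ell,j}}$ and $\Norm{X_{A_0}}{\deltax}<\pi$ under \eqref{eqn:CFL_condition_modified_energy} --- converges by the commutator estimates behind Lemma~\ref{lem:estimate_on_size_Zlj} to a polynomial vector field with dimension-independent bounds. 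The recursion \eqref{eqn:recursion_Zlj} is designed precisely so that every $t^{\ell}h^{j}$-coefficient of $X_{Q(t)}-X_{P_h^{(N)}\circ R(hA)^*}$ with $\ell+j\le N$ vanishes (such a coefficient only involves the $Z$'s already kept in $W$); hence this difference is a convergent sum of polynomial vector fields of combined $(t,h)$-order $\ge N+1$ and degree $\ge 2r(N+2)+1$, giving $\Norm{(X_{Q(s)}-X_{P_h^{(N)}\circ R(hA)^*})(v)}{\deltax}\le C h^{N+1}R^{2r(N+2)+1}$ for $s\in[0,h]$, $\Norm{v}{\deltax}\le2R$, $R\le1$. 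Writing $G(t)-\tilde G(t)=\int_0^t\big(X_{P_h^{(N)}\circ R(hA)^*}(G)-X_{Q(s)}(\tilde G)\big)\dd s$ and inserting $\pm X_{P_h^{(N)}\circ R(hA)^*}(\tilde G)$, the remainder term integrates to $\le Ch^{N+2}R^{2r(N+2)+1}$ (the extra power of $h$ coming from integrating the order-$(N+1)$ remainder over $[0,h]$), and the other term is $\le C\int_0^t\Norm{G-\tilde G}{\deltax}\dd s$ by Lemma~\ref{lem:stability_estimate_polynomial_vector_field}; Gronwall then yields $\Norm{G(h)-\tilde G(h)}{\deltax}\le C_Nh^{N+2}R^{2r(N+2)+1}$, which combined with Step~1 is \eqref{mainest}.

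\emph{Main obstacle.} The delicate part is Step~3, i.e.\ making the Baker--Campbell--Hausdorff comparison rigorous with constants that do not see the dimension. Three points matter: (i) the CFL condition \eqref{eqn:CFL_condition_modified_energy} must be used, through Lemma~\ref{lem:estimate_on_size_Zlj}, to keep $\Norm{X_{A_0}}{\deltax}$ below the radius of convergence of the relevant Lie series, since $X_{A_0}=2i\arctan(hA/2)$ is \emph{not} small in the resonant regime $hA=\mathcal O(1)$; (ii) the internal flow time $t$ of $\Phi^t_{P_h^{(N)}}$ must be kept separate from the $h$ hidden inside $A_0$ and the $P_{k,h}$, so that the recursion can be solved order by order without circularity, $t=\varepsilon=h$ being imposed only at the end; (iii) one must exploit that the recursion makes the truncation error vanish to combined order $N+1$, which --- after integration over the short interval $[0,h]$ --- is exactly what upgrades the naive $\mathcal O(h^{N+1})$ to the claimed $\mathcal O(h^{N+2})$, while the parallel bookkeeping of polynomial degrees produces the sharp power $R^{2r(N+2)+1}$.
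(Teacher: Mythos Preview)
Your argument is essentially the paper's own proof: the same reduction via Propositions~\ref{prop:splitting_formulation_on_flows}--\ref{prop:modified_polynomial_hamiltonian_expansion}, the same definition of the modified Hamiltonian from the truncated Lie series $\{Z_{\ell,j}\}$, and the same Gronwall comparison of $R(hA)\circ\Phi^t_{P^{(N)}}$ against $\Phi^1_{W(t)}$ through the intermediate Hamiltonian $Q$ satisfying \eqref{eqn:formal_series_Q}; the three ``main obstacles'' you single out are exactly the points on which the paper's proof hinges. Your bookkeeping is in fact slightly tidier --- by truncating $W$ at $\ell+j\le N+1$ and setting $H_h=h^{-1}W(h)$ you match the stated form \eqref{energy}--\eqref{decoB} with $\deg B_{k,h}=2r(k+1)+2$ on the nose --- but be aware that Lemma~\ref{lem:estimate_on_size_Zlj} as written (with the CFL \eqref{eqn:CFL_condition_modified_energy} at level $N$) only delivers the $Z_{\ell,j}$ for $\ell+j\le N$; to reach $\ell+j=N+1$ you must either invoke the CFL with $N$ replaced by $N+1$, or revert to the paper's truncation $\ell+j\le N$ and accept a harmless index shift in \eqref{decoB}.
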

\begin{proof}[Proof of Theorem~\ref{thm:modified_energy_midpoint}] The proof of this theorem is largely the same as Theorem 4.2 in \cite{faou2011hamiltonian}, but with the double expansion in $t$ and $\varepsilon$. Let us stress the main points: 

\noindent {\em (i) \underline{Reduction to the splitted form}}. 
The combination of Proposition \ref{prop:splitting_formulation_on_flows} and Proposition \eqref{prop:modified_polynomial_hamiltonian_expansion}
 shows that the Theorem is equivalent to proving the same result with the flow 
 $$
 \Phi^{1}_{A_0}\circ\Phi^{t}_{P^{(N)}_{h,\varepsilon}}
 $$ 
 instead of the midpoint rule $\varphi_h$. 
 
 \noindent {\em (ii) \underline{Construction of the modified Hamiltonian}}. 
We define the truncated expansion $Z^{(N)}(t,\varepsilon)$ of \eqref{eqn:formal_ansatz_Z} by
\begin{align*}
    Z^{(N)}(t,\varepsilon):=\sum_{\ell+j=0}^{N}t^l\varepsilon^jZ_{\ell,j,h}.
\end{align*}
and we set $H_h = Z^{(N)}(h,h)$. Note that we have indeed $A_0 = Z_{0,0,h}$ and that the 
$$
B_{k,h} := \sum_{\ell + j = k} Z_{k,j,h}
$$
are homogeneous polynomials of order $2r(N+1) + 2$. 

\noindent {\em (iii) \underline{Flow of the modified equation }}. 
We define the Hamiltonian $Q^{(N)}(t,\varepsilon)$ by the formula
\begin{align*}
 \sum_{k\geq0}\frac{1}{(k+1)!}\ad_{X_{Z^{(N)}(t,\eps)}}^k\partial_tX_{Z^{(N)}(t,\eps)}=X_{Q^{(N)}(t,\varepsilon) } . 
\end{align*}
In view of \eqref{eqn:implicit_expressionX_Z} and the construction of the $Z$, we can prove that  
\begin{align}
\label{raslbol}
&Q^{(N)}(t,\varepsilon)(u)  =  P_{h,\eps}^{(N)} \circ R(hA)^*(u)  + O( h^{N+1}R^{2r(N+2) + 2}),
\end{align}
for $|t| \leq h, |\varepsilon| \leq h,$ and $u \in B_{\deltax}(R)$. Similarly to  Lemma 4.3 in \cite{faou2011hamiltonian}, we can prove that for $t,\varepsilon \leq \tau_0(N)$ and for $R \leq 1$, small enough, we have (cf. the formal equation \eqref{eqformal}) 
   \begin{align*}
        \frac{\dd}{\dd t}\Phi_{Z^{(N)}(t,\varepsilon)}(u)=X_{Q^{(N)}(t,\varepsilon)}\circ \Phi_{Z^{(N)}(t,\varepsilon)}(u)
    \end{align*}
for $u \in B_\deltax(R)$. 

 \noindent {\em (iv) \underline{Estimating the difference}}. We set 
 \begin{align*}
v(t):=\Phi^{1}_{A_0}\circ\Phi^{t}_{P^{(N)}_{h,\varepsilon}}-\Phi_{Z^{(N)}(t,\varepsilon)}^{1},
\end{align*}
and we want to control this quantity uniformly in $|t| \leq h$ and $|\varepsilon|\leq h$. 
To do this, we write 
\begin{align*}
    \|v(t)\|_{\deltax}&\leq \int_{0}^t\left\| X_{P_{h,\eps}^{(N)}\circ R(hA)^*}\circ\Phi_{A_0}^1\circ\Phi_{P_{h,\eps}^{(N)}}^t(v(s))-X_{Q(t,\eps)}\circ\Phi_{Z^{(N)}(t,\eps)}^{1}(v(s))\right\|_{\deltax}\dd s\\
    &\leq \underbrace{\int_{0}^t\left\| X_{P_{h,\eps}^{(N)}\circ R(hA)^*}\circ\Phi_{Z^{(N)}(t,\eps)}^{1}(v(s))-X_{Q(t,\eps)}\circ\Phi_{Z^{(N)}(t,\eps)}^{1}(v(s))\right\|_{\deltax}\dd s}_{=:A}\\
    &\quad + \underbrace{\int_{0}^t\left\| X_{P_{h,\eps}^{(N)}\circ R(hA)^*}\circ\Phi_{A_0}^1\circ\Phi_{P_{h,\eps}^{(N)}}^t(v(s))-X_{P_{h,\eps}^{(N)}\circ R(hA)^*}\circ\Phi_{Z^{(N)}(t,\eps)}^{1}(v(s))\right\|_{\deltax}\dd s}_{=:B}
\end{align*}
Then 
\begin{itemize}
\item $A$ is estimated using \eqref{raslbol} and we have that 
$$
A \leq C_N h^{N+2} R^{2r(N+2) + 2}
$$
for $u \in  B_\deltax(R)$ and $R \leq 1$, as in this case $v(s) \in B_\deltax(2R)$ for $s \in [0,h]$. 

\item $B$ is estimated as the Hamiltonian vector field $X_{P_{h,\eps}^{(N)}\circ R(hA)^*}$ is locally Lipschitz (with constant depending on $N$, but uniformly in $|t| \leq h$ and $|\varepsilon|\leq h$. 
\end{itemize}
The conclusion then follows from classical Gronwall estimates \cite{gronwall1919note,howard_gronwall_1998}. 

\end{proof}

\section{Application to long-time stability}\label{sec:application_to_long-time_stability}

The existence of the modified energy guarantees the stability of the numerical scheme. The previous result then implies the existence and stability {of numerical solitons}, as in \cite{bambusi2013existence}, as well as normal form result as in \cite{FGP1,FGP2,AbouKhalil2024} yielding preservation of the Fourier modes of the solution over long times for small initial data. 

As a simple example of an application of Theorem~\ref{thm:modified_energy_midpoint}, we provide here an almost global existence result form small initial date in $V_\deltax$. It is based on the idea that for small data, the energy \eqref{energy} controls the norm $\Norm{\cdot}{\deltax}$ for small initial data. This is a consequence of the following result: 

\begin{lemma}
Assume $\deltax$ and $h$ satisfy the condition $h \leq C \deltax^2$. Then we have 
$$
A_0(u) = \bar u^T  \frac{2}{h} \arctan \left( \frac{h A}{2}\right) u \geq c\, \bar u^T A u,
$$
where the constant $c$ only depends on $C$. 
\end{lemma}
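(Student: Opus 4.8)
The plan is to diagonalise $A$ and reduce the operator inequality to a scalar inequality on each eigenvalue. First, since $A$ is real, symmetric and positive definite, I would write $A = U^{-1}DU$ as in \eqref{eqn:def_RdeltatA}, with $U$ unitary and $D = \mathrm{diag}(\lambda_1,\dots,\lambda_{2K+1})$, $\lambda_j > 0$. For $u \in \C^{2K+1}$, setting $w := Uu$, one has $\bar u^T A u = \sum_{j} \lambda_j |w_j|^2$ and, since $\arctan$ acts diagonally on $D$ by \eqref{eqn:def_RdeltatA},
$$
A_0(u) = \bar u^T\,\frac{2}{h}\arctan\!\left(\frac{hA}{2}\right) u = \sum_j \frac{2}{h}\arctan\!\left(\frac{h\lambda_j}{2}\right) |w_j|^2 .
$$
It therefore suffices to prove the scalar bound $\frac{2}{h}\arctan\!\left(\frac{h\lambda}{2}\right) \ge c\,\lambda$ for every eigenvalue $\lambda$ of $A$, with $c$ depending only on $C$.

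Writing $x := \frac{h\lambda}{2} > 0$, the desired bound becomes $\frac{\arctan x}{x} \ge c$. The key point is that $x$ is bounded above uniformly in $\deltax$ and $K$: by \eqref{diagonal}, the eigenvalues of $A$ satisfy $0 < \lambda_j = \deltax^{-2}\bigl(2 - 2\cos\tfrac{j\pi}{2K+2}\bigr) \le 4\,\deltax^{-2}$, so under the hypothesis $h \le C\deltax^2$ we get $x = \tfrac{h\lambda_j}{2} \le 2h\,\deltax^{-2} \le 2C$ for all $j$.

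Finally, I would invoke that $t \mapsto \arctan t$ is concave on $[0,\infty)$ (its second derivative $-2t(1+t^2)^{-2}$ is nonpositive there) and vanishes at $t=0$; hence the chord from $0$ to $2C$ lies below its graph, i.e. $\arctan x \ge \frac{\arctan(2C)}{2C}\,x$ for $x \in [0,2C]$, equivalently $\frac{\arctan x}{x} \ge \frac{\arctan(2C)}{2C} =: c$, which depends only on $C$. Combining these observations yields $A_0(u) \ge c\,\bar u^T A u$ for all $u$.

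There is no serious obstacle here: the only points requiring care are the uniform upper bound $4\,\deltax^{-2}$ on the spectrum of $A$ — which is precisely what the CFL-type restriction $h \le C\deltax^2$ converts into a uniform bound on $x$ — and the elementary concavity estimate that prevents $\arctan(x)/x$ from degenerating to $0$.
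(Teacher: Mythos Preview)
Your proof is correct and follows essentially the same route as the paper: diagonalise $A$, reduce to the scalar inequality $\arctan x \ge c\,x$ on the bounded interval $x\in[0,2C]$, and conclude. The only cosmetic difference is that you justify the scalar bound explicitly via concavity of $\arctan$ (yielding the concrete constant $c=\arctan(2C)/(2C)$), whereas the paper simply asserts the existence of such a $c$; this is not a different approach, just more detail.
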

\begin{proof}
We have the existence of $c$ such that for all $x \leq C$, 
$$
\arctan(x) \geq c x. 
$$
Then we know that there exists a unitary matrix $U$ such that
$$
A = U^{-1}D U \quad \mbox{with} \quad 
D = \mathrm{diag}(\lambda_j)
$$
where the $\lambda_j$ are given by \eqref{diagonal}. Note that we thus have $\frac12 h \lambda_j \leq C$ by assumption. 
Then we have, with $v = U u$, 
\begin{align*}
A_0(u) &=  \overline{v}^T \frac{2}{h} \arctan \left( \frac{h D}{2}\right) v \\
&= \sum_{j} |v_j|^2 \frac{2}{h} \arctan \left( \frac{h \lambda_j }{2}\right)\\
&\geq \sum_{j} c \lambda_j |v_j|^2 =  \bar u^T A u, \end{align*}
and this shows the result. 
\end{proof}

\begin{theorem}
\label{Theo:global}
Let $u^{n+1} = \varphi_h(u^n)$ be the map defined by the midpoint rule \eqref{eqn:midpoint_rule_nlse}. Fix $N \in \N$ and $\kappa \in (0,\frac12)$.  Then there exists  $\epsilon_0$ and $h_0$ such that for 
$h \leq h_0$ and all $K$ and $\deltax$ satisfying the CFL condition \eqref{eqn:CFL_condition_modified_energy} and all $\epsilon \leq \epsilon_0$, we have 
$$
\Norm{u^0}{\deltax} = \epsilon \quad \Longrightarrow\quad 
\Norm{u^n}{\deltax} \leq \epsilon^{1 - \kappa}, \quad \mbox{for}\quad nh \leq  ( h \epsilon^{2r(1-\kappa)})^{-N},
$$
i.e. the numerical solution is stable in $\|\cdot\|_{\deltax}$ for long times.
\end{theorem}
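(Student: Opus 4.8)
The strategy is the standard bootstrap/energy-conservation argument for symplectic integrators, now using the modified Hamiltonian $H_h$ from Theorem~\ref{thm:modified_energy_midpoint} in place of an exactly conserved quantity. First I would record the two coercivity facts that make the argument work: by the Lemma immediately preceding the theorem, under the CFL condition (which in particular forces $h \le C\deltax^2$) we have $A_0(u) \ge c\,\bar u^T A u$, and combined with the $L^2$ conservation $N(u^n) = N(u^0)$ of the midpoint rule this gives $A_0(u^n) + N(u^n) \gtrsim \Norm{u^n}{\deltax}^2$ uniformly in $K,\deltax$. On the other hand, the polynomial terms $B_{k,h}$ in \eqref{decoB} satisfy $|B_{k,h}(u)| \le C_N \Norm{u}{\deltax}^{2r(k+1)+2}$ by construction (Proposition~\ref{prop:modified_polynomial_hamiltonian_expansion} and Lemma~\ref{lem:integration_symmetric_jacobian}), so for $\Norm{u}{\deltax}$ small the full modified energy satisfies $H_h(u) + \text{(const)}\,N(u) \ge \tfrac12 \Norm{u}{\deltax}^2 - C\Norm{u}{\deltax}^4$, i.e. it controls the norm near the origin.

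Next I would track the drift of $H_h$ along the numerical flow. Since $\Phi^h_{H_h}$ is the exact Hamiltonian flow of $H_h$, we have $H_h(\Phi^h_{H_h}(u)) = H_h(u)$, and by the main estimate \eqref{mainest} of Theorem~\ref{thm:modified_energy_midpoint} together with the Lipschitz bound on $H_h$ on bounded sets,
\[
|H_h(u^{n+1}) - H_h(u^n)| = |H_h(\varphi_h(u^n)) - H_h(\Phi^h_{H_h}(u^n))| \le C_N h^{N+2} R^{2r(N+2)+1}
\]
as long as $\Norm{u^n}{\deltax} \le R$. The quantity $\Ec(u) := H_h(u) + \mu N(u)$ (with $\mu$ a fixed constant chosen so the coercivity above holds) is then almost conserved: it drifts by at most $C_N h^{N+2} R^{2r(N+2)+1}$ per step, since $N$ is exactly conserved.

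The bootstrap then runs as follows. Set $R = \epsilon^{1-\kappa}$ and assume inductively that $\Norm{u^m}{\deltax} \le R$ for all $m \le n$. Summing the per-step drift, $|\Ec(u^n) - \Ec(u^0)| \le n\, C_N h^{N+2} R^{2r(N+2)+1}$. Using the lower coercivity bound for $\Ec(u^n)$ and the upper bound $\Ec(u^0) \le \Norm{u^0}{\deltax}^2 + C\Norm{u^0}{\deltax}^4 \lesssim \epsilon^2$, we obtain $\Norm{u^n}{\deltax}^2 \lesssim \epsilon^2 + n\, h^{N+2} R^{2r(N+2)+1}$. For this to close with a factor of $\tfrac12 R^2 = \tfrac12\epsilon^{2(1-\kappa)}$ to spare, we need $n\, h^{N+2}\epsilon^{(1-\kappa)(2r(N+2)+1)} \lesssim \epsilon^{2(1-\kappa)}$, i.e. $n h \lesssim h^{N+1}\epsilon^{-(1-\kappa)(2r(N+2)-1)}$; choosing $\epsilon_0, h_0$ small one checks this is implied by the stated time horizon $nh \le (h\epsilon^{2r(1-\kappa)})^{-N}$ (here the gap between $2r(N+2)$ and $2rN$ and the slack $\kappa>0$ in the exponent provide the room, possibly after shrinking $h_0$ depending on $N$). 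This shows $\Norm{u^{n+1}}{\deltax} \le R$, closing the induction.

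The main obstacle is purely bookkeeping: verifying that the accumulated error term $n h^{N+2} R^{2r(N+2)+1}$ stays below $\tfrac12 R^2$ over the full horizon $nh \le (h\epsilon^{2r(1-\kappa)})^{-N}$, which requires carefully matching the powers of $\epsilon$ and $h$ and using that $\kappa < \tfrac12$ gives enough slack; and ensuring the Lipschitz constant of $H_h$ and all coercivity constants are genuinely independent of $K$ and $\deltax$, which is exactly what Theorem~\ref{thm:modified_energy_midpoint} and the preceding Lemma are designed to deliver. One must also keep the iterates inside the ball $B_{\deltax}(1)$ where \eqref{mainest} applies, but this is automatic once $\epsilon_0 \le 1$ since $R = \epsilon^{1-\kappa} \le 1$.
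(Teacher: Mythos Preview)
Your proposal is correct and follows essentially the same route as the paper: bootstrap with $R=\epsilon^{1-\kappa}$, use exact conservation of $H_h$ along $\Phi^h_{H_h}$ together with \eqref{mainest} to bound the per-step drift, sum, and close via the coercivity of $A_0+N$ from the preceding Lemma and the polynomial bounds on $B_h$. The only slip is a sign typo in your horizon computation: the inequality should read $nh \lesssim h^{-(N+1)}\epsilon^{-(1-\kappa)(2r(N+2)-1)}$ (not $h^{N+1}$), after which the stated horizon $nh \le (h\epsilon^{2r(1-\kappa)})^{-N}=h^{-N}\epsilon^{-2rN(1-\kappa)}$ is indeed stronger since $h\le 1$ and the $\epsilon$-exponent has slack $(1-\kappa)(4r-1)>0$.
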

\begin{proof}
Let $H_h$ be the energy \eqref{energy} given by Theorem \ref{thm:modified_energy_midpoint}. 
Assume that $u^n \in B_{\deltax}(\epsilon^{1 - \kappa})$. Then we have using \eqref{mainest} and the fact that that $H_h(\Phi^h_{H_h}(u)) = H_h(u)$ for all $u$, 
\begin{align*}
|H_h(u^{n+1}) - H_h(u^{n})|&\leq |H_h(\varphi_h(u^n)) - H_h(\Phi^h_{H_h}(u^n))|+|H_h(\Phi^h_{H_h}(u^n))  - H_h(u^{n})|\\
& \leq C_N h^{N+2}  \epsilon^{(2r (N+2)  +  1)(1 - \kappa) }. 
\end{align*}
This shows that as long as $u^n \in B_{\deltax}(\epsilon^{1-\kappa})$, we have 
$$
|H_h(u^{n+1}) - H_h(u^{0})| \leq (nh) C_N h^{N+1} \epsilon^{(2r (N+2)  +  1)(1- \kappa)}. 
$$
But using \eqref{decoB}, we have 
\begin{align*}
|A_0(u^{n}) - A_0(u^{0})|  &\leq |H_h(u^{n+1}) - H_h(u^{0})| + C_N \sum_{k = 0}^N  \epsilon^{(2r(k+1) + 2)(1 - \kappa)} \\
&\leq C_N \epsilon^{(2r + 2)(1-\kappa)} \left[ 1 + (nh) h^{N+1} \epsilon^{2r (1-\kappa)N} \right] 
\leq 2 C_N \epsilon^{(2r + 2)(1-\kappa)} 
\end{align*}
for $(nh) h^{N+1} \epsilon^{2r N(1 - \kappa)} \leq 1$. 
As the $L^2$-norm $N(u)$ is preserved (see \eqref{eq:discrL2}), we deduce that as long as $u^n \in B_{\deltax}(\epsilon^{1-\kappa})$ and for 
$nh \leq ( h\varepsilon^{2r(1-\kappa)})^{-N}$ we have, using the previous Lemma, 
\begin{align*}
\Norm{u^\delta}{\deltax}^2 &\leq \frac{1}{c} \Big( A_0(u^n) + N(u^n) \Big) 
\leq C_N \left(  A_0(u^0) + N(u^0) + \epsilon^{(2r + 2)(1 - \kappa))} \right)
\\ & \leq C_N \left(  \Norm{u^0}{\deltax}^2 + \epsilon^{(2r + 2)(1 - \kappa))} \right)
\leq C_N  \epsilon^2 \leq \epsilon^{2(1-\kappa)}
\end{align*}
as $(r+1)(1-\kappa) > 1$, and 
for $\epsilon$ small enough, and with possible changes of the constant $C_N$ between the lines of the previous estimates. This shows the desired result.  
\end{proof}


\bibliographystyle{plain} 
\bibliography{biblio}

\end{document}